\newif\ifprintable
\newcommand{\url}[1]{{#1}}
\newif\ifmarking
\title{The three-dimensional Fueter equation\\
and divergence-free frames}   
\author{Dietmar~Salamon\thanks{Partially supported 
by the Swiss National Science Foundation Grant 200021-127136} \\
ETH Z\"urich} 
\date{25 February 2013}   
\newtheorem{PARA}{}[section] 
\newtheorem{theorem}[PARA]{Theorem} 
\newtheorem{lemma}[PARA]{Lemma} 
\newtheorem{definition}[PARA]{Definition}    
\newtheorem{remark}[PARA]{Remark} 
\newtheorem{example}[PARA]{Example} 
\newtheorem{conjecture}[PARA]{Conjecture} 
\newcommand{\sA}{\mathscr{A}}    
\newcommand{\sB}{\mathscr{B}}    
\newcommand{\sD}{\mathscr{D}}    
\newcommand{\sE}{\mathscr{E}}    
\newcommand{\sF}{\mathscr{F}}    
\newcommand{\sG}{\mathscr{G}}    
\newcommand{\sH}{\mathscr{H}}
\newcommand{\sL}{\mathscr{L}}
\newcommand{\sP}{\mathscr{P}}    
\newcommand{\sR}{\mathscr{R}}    
\newcommand{\sS}{\mathscr{S}}
\newcommand{\sV}{\mathscr{V}}
\newcommand{\tsF}{\widetilde{\mathscr{F}}}   
\newcommand{\cA}{\mathcal{A}}    
\newcommand{\cB}{\mathcal{B}}
\newcommand{\cF}{\mathcal{F}}    
\newcommand{\cE}{\mathcal{E}}    
\newcommand{\cG}{\mathcal{G}}
\newcommand{\cL}{\mathcal{L}}    
\newcommand{\cM}{\mathcal{M}}    
\newcommand{\cP}{\mathcal{P}}    
\newcommand{\cR}{\mathcal{R}}
\newcommand{\cX}{\mathcal{X}}
\newcommand{\A}{{\mathbb{A}}}
\renewcommand{\H}{{\mathbb{H}}}
\newcommand{\Q}{{\mathbb{Q}}}    
\newcommand{\R}{{\mathbb{R}}}    
\newcommand{\T}{{\mathbb{T}}}    
\newcommand{\Z}{{\mathbb{Z}}}    
\newcommand{\CS}{{\mathrm{CS}}}    
\newcommand{\CSD}{{\mathrm{CSD}}}    
\newcommand{\cs}{{\mathrm{cs}}}
\newcommand{\CFHK}{\mathrm{CF}^{\mathrm{hk}}}  
\newcommand{\HFHK}{\mathrm{HF}^{\mathrm{hk}}}    
\newcommand{\HFDT}{\mathrm{HF}^{\mathrm{DT}}}      
\newcommand{\Sp}{{\mathrm{Sp}}}
\newcommand{\Map}{{\mathrm{Map}}}
\newcommand{\tsD}{{\widetilde\sD}}
\newcommand{\tsH}{{\widetilde\sH}}
\newcommand{\tL}{{\widetilde L}}    
\newcommand{\tM}{{\widetilde M}}
\newcommand{\tsR}{{\widetilde\sR}}    
\newcommand{\tsS}{{\widetilde\sS}}    
\newcommand{\tsV}{{\widetilde\sV}}    
\newcommand{\tX}{{\widetilde X}}
\newcommand{\tg}{{\widetilde g}}
\newcommand{\tpi}{{\widetilde\pi}}
\newcommand{\fhat}{{\widehat f}}    
\newcommand{\vhat}{{\widehat v}}    
\renewcommand{\i}{{\mathbf{i}}}    
\renewcommand{\j}{{\mathbf{j}}}    
\renewcommand{\k}{{\mathbf{k}}}    
\newcommand{\one}    
{{{\mathchoice \mathrm{ 1\mskip-4mu l} \mathrm{ 1\mskip-4mu l}    
\mathrm{ 1\mskip-4.5mu l} \mathrm{ 1\mskip-5mu l}}}}    
\def\slashi#1{\rlap{\sl/}#1}
\def\slashii#1{\setbox0=\hbox{$#1$}             
\dimen0=\wd0                                 
\setbox1=\hbox{\sl/} \dimen1=\wd1            
\ifdim\dimen0>\dimen1                        
\rlap{\hbox to \dimen0{\hfil\sl/\hfil}}   
#1                                        
\else                                        
\rlap{\hbox to \dimen1{\hfil$#1$\hfil}}   
\hbox{\sl/}                               
\fi}                                         %
\def\slashiii#1{\setbox0=\hbox{$#1$}#1\hskip-\wd0\hbox to\wd0{\hss\sl/\/\hss}}
\newcommand{\dd}{{\slashi{\partial}}}
\newcommand{\dD}{{\slashiii{D}}}
\newcommand{\dslash}{/\mskip-6mu/}    
\newcommand{\ch}{\mathrm{ ch}}    
\newcommand{\dvol}{\mathrm{ dvol}} 
\newcommand{\coker}{\mathrm{ coker }}
\newcommand{\trace}{\mathrm{ trace }}
\newcommand{\odd}{{\mathrm{odd}}}
\newcommand{\id}{\mathrm{ id}} 
\newcommand{\im}{\mathrm{ im }}
\newcommand{\INDEX}{\mathrm{ index}}
\newcommand{\Hom}{\mathrm{Hom}}
\newcommand{\G}{{\mathrm{G}}}     
\newcommand{\GL}{{\mathrm{GL}}}
\newcommand{\SO}{{\mathrm{SO}}}             
\newcommand{\Spin}{\mathrm{ Spin}}       
\newcommand{\g}{{\mathfrak{g}}}
\newcommand{\Lie}{{\mathrm{Lie}}}
\newcommand{\Diff}{\mathrm{ Diff}}  
\newcommand{\End}{\mathrm{ End}}     
\newcommand{\Vect}{\mathrm{ Vect}}     
\newcommand{\eps}{{\varepsilon}}    
\newcommand{\om}{{\omega}}    
\newcommand{\Om}{{\Omega}}    
\newcommand{\Cinf}{C^{\infty}}    
\newcommand{\reg}{\mathrm{ reg}}    
\newcommand{\sing}{\mathrm{ sing}}    
\newcommand{\inner}[2]{\bigl\langle #1, #2\bigr\rangle}    
\newcommand{\Inner}[2]{\left\langle #1, #2\right\rangle}    
\newcommand{\winner}[2]{\left\langle #1{\wedge}#2\right\rangle}    
\def\NABLA#1{{\mathop{\nabla\kern-.5ex\lower1ex\hbox{$#1$}}}}    
\def\Nabla#1{\nabla\kern-.5ex{}_{#1}}    
\def\Tabla#1{\Tilde\nabla\kern-.5ex{}_{#1}}    
\def\abs#1{\mathopen|#1\mathclose|}    
\def\Abs#1{\left|#1\right|}    
\def\Norm#1{\left\|#1\right\|}    
\renewcommand{\Tilde}{\widetilde}
\newcommand{\p}{{\partial}}
\begin{document}    
    
\maketitle    

    
\begin{abstract}  
This paper extends hyperk\"ahler Floer theory 
with flat target mani\-folds from the case where the 
source is a $3$-sphere or $3$-torus, equipped with 
a standard frame, to the case where the source is a 
general closed orientable $3$-manifold, equipped with 
a {\it regular} divergence-free frame. 
Regular divergence-free frames are characterized 
by the nonexistence of nonconstant solutions to the 
unperturbed linear Fueter equation.  They form a dense 
open subset of the space of all divergence-free frames.
A gauged version of the Fueter equation is introduced,
which unifies various geometric equations in gauge theory. 
\end{abstract}

   
\section{Introduction} \label{sec:INTRO}

The equation in the title was introduced by Rudolph Fueter
in his study of analytic functions of one quaternionic variable
in the 1930's~\cite{Fu1,Fu2}.  The three-dimensional reduction
of the Fueter equation carries over to functions $f:M\to X$ from 
any three-manifold $M$ (equipped with a volume form and a global 
divergence-free frame $v_1,v_2,v_3$) to any hyperk\"ahler 
manifold $X$ (with complex structures $J_1,J_2,J_3$).  
It has the form
\begin{equation}\label{eq:FUETER}
J_1\p_{v_1}f + J_2\p_{v_2}f+J_3\p_{v_3}f=\nabla H(f).
\end{equation}
The function $H:M\times X\to\R$ determines a zeroth order
perturbation.  There is a natural analogy between the solutions
of~\eqref{eq:FUETER} and periodic orbits of Hamiltonian
systems in a symplectic manifold. The solutions 
of~\eqref{eq:FUETER} are critical points of an action functional,
there is a Floer type theory for this functional, and an analogue
of the Arnold conjecture holds in favorable~cases.
For flat target manifolds and standard frames 
on $S^3$ and $\T^3$ hyperk\"ahler Floer theory 
was developed in~\cite{HNS1,HNS2} and the Arnold 
conjecture was derived as a corollary.  With different methods 
the hyperk\"ahler Arnold conjecture was extended by 
Ginzburg--Hein~\cite{GH1,GH2} to a more general setting.

The starting point for the present paper was the question 
under which condition the hyperk\"ahler Floer theory in~\cite{HNS1,HNS2}
extends to general three-manifolds and divergence-free frames.
The key point is an estimate which asserts that the $L^2$-norm
of the sum 
$
\dd_vf:=J_1\p_{v_1}f + J_2\p_{v_2}f+J_3\p_{v_3}f
$
controls the $L^2$ norm of $df$ (Lemma~\ref{le:estimate}).  
Such an estimate holds if and only if the linear Fueter equation
with target space $X=\H$ (the quaternions) and zero Hamiltonian 
has no nonconstant solutions.  If nonconstant solutions 
do exist, compactness fails for the solutions of equation~\eqref{eq:FUETER}.
Call a divergence-free frame $v$ {\it regular} if
every solution $f:M\to\H$ of the linear Fueter equation 
$\dd_vf=0$ is constant and call it {\it singular} otherwise.   
Then the hyperk\"ahler Floer theory with flat target manifolds 
carries over to all regular divergence-free frames and the 
hyperk\"ahler Arnold conjecture holds in this case.

Section~\ref{sec:FUETER} discusses the space $\sV$
of divergence-free frames on a closed three-manifold
and the linear Fueter operator.  It is shown that the set
$\sV^\reg$ of regular divergence-free frames is open 
and dense in $\sV$ (Lemma~\ref{le:regular}), 
that the set $\sV_1$ of divergence-free frames 
$v\in\sV$ with $\dim(\ker\,\dd_v)=8$ is a codimension
one submanifold of $\sV$ (Lemma~\ref{le:V1}), and the 
topology of $\sV$ is examined (Lemma~\ref{le:topology}). 
Section~\ref{sec:EX} discusses several examples. 
It is shown that singular divergence-free
frames exist on $M=S^3$ and that the standard frame on 
$M=S^2\times S^1$ is regular. Section~\ref{sec:HF} explains 
how the hyperk\"ahler Floer theory of~\cite{HNS1,HNS2} 
extends to regular divergence-free frames.

Section~\ref{sec:GAUGE} introduces the {\it gauged Fueter equation},
associated to a $\G$-action on $X$ generated by a hyperk\"ahler 
moment map $\mu=(\mu_1,\mu_2,\mu_3):X\to\g^3$.
The four-dimensional version of this equation has the form
\begin{equation}\label{eq:Gfueter}
\p_su+L_u\Phi-\dd_{A,v}u = 0,\qquad
\p_sA-d_A\Phi + *F_A 
= \sum_i(\mu_i\circ u)\pi^*\alpha_i
\end{equation}
for a principal $\G$-bundle $P\to Y$, a $\G$-equivariant function 
$u:\R\times P\to X$, and a $\G$-connection $A(s)+\Phi(s)\,ds$
on $\R\times P$.  This is analogous to the symplectic vortex 
equations~\cite{CGS,M2}, and similar equations were studied 
by Haydys~\cite{H1}. The Fueter equation in dimension 
four corresponds to $\G=1$, the Seiberg--Witten equations
to $X=\H$, $\G=S^1$ (Section~\ref{sec:SW}),
Taubes' generalized Seiberg--Witten equations
in~\cite{T} to $\G=S^1$, the Pidstrigatch--Tyurin 
equations~\cite{P,PT} to $X=\H$, $\G=\Sp(1)$, 
the instanton Floer equation to $X=\{\mathrm{point}\}$, 
and the Donaldson--Thomas $\G_2$-instantons on 
${Y=M\times\Sigma}$ ($\Sigma$ a hyperk\"ahler surface) appear 
when $X$ is taken to be the space of con\-nec\-tions on $\Sigma$
and $\G$ the group of gauge transformations 
(Section~\ref{sec:DT}).  

Multiplying the right hand side in~\eqref{eq:Gfueter} by $\eps^{-2}$ 
and taking the limit ${\eps\to0}$, one finds that equation~\eqref{eq:Gfueter} 
degenerates formally to the standard Fueter equation for functions 
with values in the hyperk\"ahler quotient $X\dslash\G=\mu^{-1}(0)/\G$. 
This is reminiscent of the Atiyah--Floer conjecture~\cite{DT}.
In~\cite{Wa1,Wa2} Walpuski carried out the adiabatic 
limit analysis to prove existence of $\G_2$-instantons.

Appendix~\ref{app:DFF} contains a proof of Gromov's theorem
that the inclusion of the space of global divergence-free frames
into the space of all frames is a homotopy equivalence.
Appendix~\ref{app:RC} discusses some functional analytic
background about self-adjoint Fredholm operators that
is needed in Section~\ref{sec:FUETER}. 

   
\section{The Fueter equation} \label{sec:FUETER}  

Let $M$ be a closed oriented three-manifold 
and $\dvol_M\in\Om^3(M)$ be a positive volume form.
Then $M$ is parallelizable and a theorem 
of Gromov~\cite{G} asserts that every frame of $TM$ 
can be deformed to a divergence-free frame.  
The space of positive divergence-free frames will be denoted by 
\begin{equation}\label{eq:frame}
\sV := \left\{(v_1,v_2,v_3)\in\Vect(M)^3\,\Bigg|\,
\begin{array}{l}
\cL_{v_i}\dvol_M=0\mbox{ for }i=1,2,3\\
\mbox{and }\dvol_M(v_1,v_2,v_3)>0\mbox{ on }M
\end{array}
\right\}.
\end{equation}
Thus $\sV$ is a subset of the space $\sF$ of positive frames. 
Formally, there is an analogy between the inclusion $\sV\hookrightarrow\sF$ 
and the inclusion of the space of symplectic forms into the space 
of all nondegenerate $2$-forms.  However, in contrast to the symplectic 
setting, the h-principle rules and the inclusion of $\sV$ into $\sF$ 
is a homotopy equivalence (see Theorem~\ref{thm:gromov}).  

Associated to every divergence-free frame ${v=(v_1,v_2,v_3)}$ 
is a self-adjoint Fredholm operator $\dd_v$ defined as follows.
Let $\H$ denote the space of quaternions
and write the elements of $\H$ in the form
$x=x_0+\i x_1+\j x_2+\k x_3$ with $x_0,x_1,x_2,x_3\in\R$.
Define the complex structures $I_1,I_2,I_3$ and $J_1,J_2,J_3$ 
on $\H$ as right and left multiplication by $\i,\j,\k$, 
respectively.  Thus, for $x\in\H$, 
\begin{equation}\label{eq:IJ}
\begin{split}
J_1x&:=\i x,\qquad\;\; J_2x:=\j x,\qquad\;\;\, J_3x:=\k x,\\
I_1x&:=-x\i,\qquad I_2x:=-x\j,\qquad I_3x:=-x\k.
\end{split}
\end{equation}
Thus the complex structures $J_1,J_2,J_3$ (respectively $I_1,I_2,I_3$)
satisfy the Clifford relations. Given a divergence-free frame
$v=(v_1,v_2,v_3)\in\sV$ define the linear first order 
differential operator 
$
\dd_v:\Om^0(M,\H)\to\Om^0(M,\H)
$
by 
\begin{equation}\label{eq:fueter}
\dd_vf:=J_1\p_{v_1}f+J_2\p_{v_2}f+J_3\p_{v_3}f.
\end{equation}
This is the {\bf Fueter operator}~\cite{Fu1,Fu2}.
It commutes with $I_i$ for $i=1,2,3$ and hence is equivariant 
under the right action of the quaternions on $\Om^0(M,\H)$. 
The divergence-free condition asserts that $\dd_v$ is symmetric 
with respect to the $L^2$ inner product on $\Om^0(M,\H)$ determined 
by the volume form and the standard inner product on $\H$. 
Denote this inner product by
$$
\Inner{f}{g}_{L^2} := \int_M\inner{f}{g}\,\dvol_M.
$$
The notation~\eqref{eq:fueter} extends to any triple of 
divergence-free vector fields and still defines a 
symmetric operator. It is self-adjoint, 
whenever $v$ is also a frame.

\begin{lemma}\label{le:elliptic}
Let $v=(v_1,v_2,v_3)\in\sV$.  Then 
$\dd_v:W^{1,2}(M,\H)\to L^2(M,\H)$
is a symmetric operator and,
for every $f\in L^2(M,\H)$,
\begin{equation}\label{eq:sadd}
\sup_{0\ne g\in\Om^0(M,\H)}
\frac{\Abs{\Inner{f}{\dd_vg}_{L^2}}}
{\Norm{g}_{L^2}}<\infty
\quad\iff\quad
f\in W^{1,2}(M,\H).
\end{equation}
\end{lemma}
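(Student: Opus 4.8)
The plan is to establish symmetry of $\dd_v$ first (which at once yields the implication ``$\Leftarrow$'' in~\eqref{eq:sadd}) and then to obtain ``$\Rightarrow$'' from the ellipticity of $\dd_v$ by an elliptic regularity argument. For symmetry, note that left multiplication by $\i,\j,\k$ is an orthogonal endomorphism of $\H\cong\R^4$ squaring to $-\one$, so each $J_i$ is skew-symmetric for the standard inner product on $\H$, and, being a constant endomorphism, $J_i$ commutes with $\p_{v_i}$. For a divergence free vector field $X$ and $f,g\in\Om^0(M,\H)$ one combines the product rule $\p_X\inner{f}{g}=\inner{\p_Xf}{g}+\inner{f}{\p_Xg}$ with the identity $(\p_Xh)\dvol_M=\cL_X(h\,\dvol_M)=d\iota(X)(h\,\dvol_M)$ (valid because $h\,\dvol_M$ is a top form and $d\iota(X)\dvol_M=0$), and integrates over the closed manifold $M$; by Stokes' theorem $\p_X$ is then skew-symmetric with respect to the $L^2$ inner product. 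Hence $(J_i\p_{v_i})^*=\p_{v_i}^*J_i^*=(-\p_{v_i})(-J_i)=J_i\p_{v_i}$, and summing over $i$ shows that $\dd_v$ is symmetric. The implication ``$\Leftarrow$'' follows immediately: given $f\in W^{1,2}(M,\H)$, pick $f_n\in\Om^0(M,\H)$ with $f_n\to f$ in $W^{1,2}$ and let $n\to\infty$ in $\Inner{f_n}{\dd_vg}_{L^2}=\Inner{\dd_vf_n}{g}_{L^2}$ to see that the supremum in~\eqref{eq:sadd} is at most $\Norm{\dd_vf}_{L^2}<\infty$.

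For ``$\Rightarrow$'' I would first record that $\dd_v$ is elliptic: its principal symbol at a nonzero covector $\xi\in T_x^*M$ is, up to a nonzero scalar, left multiplication on $\H$ by the imaginary quaternion $\xi(v_1)\i+\xi(v_2)\j+\xi(v_3)\k$, which is nonzero, hence invertible, because $v_1,v_2,v_3$ span $T_xM$. (Equivalently, the Clifford relations $J_iJ_j+J_jJ_i=-2\delta_{ij}\one$ give $\dd_v^2=-\sum_i\p_{v_i}^2+\sum_{i<j}J_iJ_j\,\p_{[v_i,v_j]}$, a second order operator with leading symbol $\sum_i\xi(v_i)^2>0$ for $\xi\ne0$.) Now suppose the left side of~\eqref{eq:sadd} holds. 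Then $g\mapsto\Inner{f}{\dd_vg}_{L^2}$ extends to a bounded linear functional on $L^2$, so there is $h\in L^2(M,\H)$ with $\Inner{f}{\dd_vg}_{L^2}=\Inner{h}{g}_{L^2}$ for all $g\in\Om^0(M,\H)$; since $\dd_v$ is formally self-adjoint this says exactly that $\dd_vf=h\in L^2$ in the distributional sense, and interior elliptic regularity for the first order elliptic operator $\dd_v$ forces $f\in W^{1,2}_{\mathrm{loc}}$, hence $f\in W^{1,2}(M,\H)$ because $M$ is closed.

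The only step needing genuine work is the last one, the passage from ``$f\in L^2$ with distributional $\dd_vf\in L^2$'' to ``$f\in W^{1,2}$''; everything else is the algebra of the quaternions and integration by parts. A convenient self-contained route is to derive the G\aa rding estimate $\Norm{f}_{W^{1,2}}\le C\bigl(\Norm{\dd_vf}_{L^2}+\Norm{f}_{L^2}\bigr)$ for $f\in\Om^0(M,\H)$ from the symbol computation above, and then justify applying it to the merely $L^2$ function $f$ by a Friedrichs mollifier argument (or by finite difference quotients in local coordinates); alternatively, observing that $\dd_v^2f=\dd_vh$ lies in $W^{-1,2}$, one may quote the standard interior estimate for the second order elliptic operator $\dd_v^2$, which gains two derivatives and again yields $f\in W^{1,2}$.
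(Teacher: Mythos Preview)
Your proof is correct. Your primary route for ``$\Rightarrow$'' differs slightly from the paper's: you read the hypothesis directly as ``$\dd_vf\in L^2$ in the distributional sense'' and then quote first-order elliptic regularity for $\dd_v$. The paper instead substitutes $g=\dd_vh$ and uses the identity $\dd_v\dd_v=-\sL_v-\dd_w$ (with $w_i:=[v_j,v_k]$ cyclically and $\sL_v:=\sum_i\p_{v_i}\p_{v_i}$) to convert the hypothesis into $\sup_{h\ne0}\Abs{\Inner{f}{\sL_vh}}/\Norm{h}_{W^{1,2}}<\infty$, and then invokes second-order elliptic regularity for $\sL_v$. This is precisely the ``alternative'' you sketch in your last sentence, so the two arguments are really the same regularity statement packaged differently; your first-order route is marginally cleaner, while the paper's reduction makes the appeal to a Laplace-type operator completely explicit. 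One cosmetic remark: with the paper's sign convention $\cL_{[u,v]}=-[\cL_u,\cL_v]$ the cross term in your formula for $\dd_v^2$ acquires the opposite sign (so that $\dd_v^2=-\sL_v-\dd_w$), but this is a lower-order term and does not affect the ellipticity argument.
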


\begin{proof}
That the operator is symmetric is obvious.
To prove~\eqref{eq:sadd}, define the divergence-free 
vector fields $w_1,w_2,w_3\in\Vect(M)$ 
by
\begin{equation}\label{eq:w}
w_1:=[v_2,v_3],\qquad w_2:=[v_3,v_1],\qquad w_3:=[v_1,v_2].
\end{equation}
(Here and throughout I use the sign convention 
$
\cL_{[u,v]}=-[\cL_u,\cL_v]
$ 
for the Lie bracket of two vector fields $u,v\in\Vect(M)$.)
Then 
\begin{equation}\label{eq:dd2}
\dd_v\dd_v = -\sL_v-\dd_w,\qquad
\sL_v:=\sum_{i=1}^3\p_{v_i}\p_{v_i}.
\end{equation}
Hence, taking $g=\dd_vh$ in~\eqref{eq:sadd}, we find
$$
\sup_{g\ne 0}
\frac{\Abs{\Inner{f}{\dd_vg}_{L^2}}}
{\Norm{g}_{L^2}}<\infty
\quad\implies\quad
\sup_{h\ne 0}
\frac{\Abs{\Inner{f}{\sL_vh}_{L^2}}}
{\Norm{h}_{W^{1,2}}}<\infty.
$$
Now it follows from standard elliptic regularity for the
second order operator $\sL_v$ that $f\in W^{1,2}(M,\H)$.  
This proves Lemma~\ref{le:elliptic}.
\end{proof}

By Lemma~\ref{le:elliptic}, the operator 
$\dd_v:W^{1,2}(M,\H)\to L^2(M,\H)$ satisfies condition~(i) 
in Lemma~\ref{le:fredholm} and hence is a self-adjoint index 
zero  Fredholm operator for every $v\in\sV$. 
Its kernel consists of smooth functions, by 
elliptic regularity, and contains the constant functions. 
The dimension of the kernel is divisible by four.

\begin{definition}
A divergence-free frame $v\in\sV$ 
is called {\bf regular} if every solution
${f:M\to\H}$ of the linear Fueter equation
$\dd_vf=0$ is constant. Otherwise $v$ is called {\bf singular}.
The set of regular (respectively singular) divergence-free frames 
is denoted by $\sV^\reg$ (respectively $\sV^\sing$).
\end{definition}

\begin{lemma}\label{le:regular}
The set $\sV^\reg$ is open and dense in $\sV$.
\end{lemma}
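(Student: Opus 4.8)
\emph{Plan of proof.}
Openness is the easy part. The assignment $v\mapsto\dd_v$ is affine --- indeed polynomial of degree one --- from $\sV$ into the space of bounded operators $W^{1,2}(M,\H)\to L^2(M,\H)$, since $(\dd_v-\dd_{v'})f=\sum_iJ_i\,df(v_i-v_i')$ is bounded by $\mathrm{const}\cdot\norm{v-v'}_{C^0}\norm{f}_{W^{1,2}}$. For a norm--continuous family of Fredholm operators $v\mapsto\dim\ker\dd_v$ is upper semicontinuous, so $\{v:\dim\ker\dd_v\le4\}$ is open; as $\ker\dd_v$ always contains the constants this set equals $\sV^\reg$. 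The substance of the lemma is therefore the density statement, which I would prove by contradiction.

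Suppose $\sV^\reg$ were not dense. Then there is a nonempty open set on which $\dim\ker\dd_v\ge8$, and since $v\mapsto\dim\ker\dd_v$ is integer valued and upper semicontinuous we may shrink it to a nonempty open $U$ on which $\dim\ker\dd_v$ is constant, equal to some $n_0\ge8$. Fix $v_0\in U$, choose $\eps>0$ so small that $0$ is the only point of $\mathrm{spec}(\dd_{v_0})$ in the closed disc of radius $\eps$, and form the Riesz projections $P(v):=\tfrac1{2\pi i}\oint_{\abs z=\eps}(\dd_v-z)^{-1}\,dz$ for $v$ near $v_0$. These depend smoothly on $v$, have locally constant rank $n_0$, and hence (since $\dim\ker\dd_v\equiv n_0$ on $U$) are the orthogonal projections onto $\ker\dd_v$. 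Given $f\in\ker\dd_{v_0}$ and any triple $\hat v=(\hat v_1,\hat v_2,\hat v_3)$ of divergence free vector fields, the curve $t\mapsto P(v_0+t\hat v)f$ lies in $\ker\dd_{v_0+t\hat v}$, so differentiating $\dd_{v_0+t\hat v}\bigl(P(v_0+t\hat v)f\bigr)=0$ at $t=0$ and using $\dd_{v_0+t\hat v}=\dd_{v_0}+t\dd_{\hat v}$ gives
\[
\dd_{\hat v}f\in\range\dd_{v_0}=(\ker\dd_{v_0})^\perp,\qquad\text{hence}\qquad\Inner{\dd_{\hat v}f}{f}_{L^2}=0 .
\]
I claim this forces every $f\in\ker\dd_{v_0}$ to be constant; then $n_0=\dim\ker\dd_{v_0}=4$, a contradiction. (For disconnected $M$ one argues componentwise.)

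The crux is therefore: \emph{if $f\in\Om^0(M,\H)$ satisfies $\dd_vf=0$ and $\Inner{\dd_{\hat v}f}{f}_{L^2}=0$ for every triple $\hat v$ of divergence free vector fields, then $f$ is constant.} Taking $\hat v$ supported in one slot, $\hat v=(w,0,0),(0,w,0),(0,0,w)$ with $w$ divergence free, shows that the real $1$-forms $\beta_a:=\inner{J_a\,df}{f}$ $(a=1,2,3)$ satisfy $\int_M\beta_a\wedge\sigma=0$ for every closed $2$-form $\sigma=\iota(w)\dvol_M$; by Stokes and Poincar\'e duality each $\beta_a$ is exact, $\beta_a=d\phi_a$, and trivially $\inner{df}{f}=\tfrac12 d\abs f^2=d\phi_0$ with $\phi_0:=\tfrac12\abs f^2$. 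Since $\{f,\i f,\j f,\k f\}$ is an orthogonal basis of $\H$ wherever $f\ne0$, and $\inner{df}{\i f}=-\inner{\i\,df}{f}=-d\phi_1$ (similarly for $\j,\k$), expanding $df$ in this basis gives, on $\{f\ne0\}$,
\[
df=\abs f^{-2}(d\bar q)\,f,\qquad q:=\phi_0+\phi_1\i+\phi_2\j+\phi_3\k ,
\]
hence $d\bar q=(df)\bar f$ there and, both sides being continuous, everywhere on $M$. Now $d\bar q$ is exact, so $0=d\bigl((df)\bar f\bigr)=-\,df\wedge d\bar f$; pointwise this reads $df(X)\overline{df(Y)}=df(Y)\overline{df(X)}$ for all $X,Y$, and choosing $X$ with $a:=df_x(X)\ne0$ forces $a\,\overline{df_x(Y)}\in\R$, i.e.\ $\range(df_x)\subseteq\R a$, so $df$ has rank $\le1$ at every point. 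Finally, if $df_x=a\otimes\lambda$ with $a\ne0$ and $\lambda\in T_x^*M$, then $0=\dd_vf(x)=\bigl(\lambda(v_1)\i+\lambda(v_2)\j+\lambda(v_3)\k\bigr)a$, whence $\lambda(v_1)=\lambda(v_2)=\lambda(v_3)=0$ and therefore $\lambda=0$ because $(v_1,v_2,v_3)$ is a \emph{frame}. So $df\equiv0$ on $\{f\ne0\}$, hence on the dense set $\{f\ne0\}\cup\INT\{f=0\}$, hence everywhere, and $f$ is constant.

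The main obstacle is exactly this last step. What makes it go is the interplay of three ingredients: Hodge theory on $M$ (which turns ``$\dd_{\hat v}f\perp f$ for every divergence free $\hat v$'' into exactness of the forms $\inner{J_a\,df}{f}$), quaternionic algebra (which recognises the resulting $1$-form as $(df)\bar f$ and extracts that $df$ has rank at most one), and only then the frame condition together with the Fueter equation (which collapses that rank to zero). The bookkeeping --- the non-commutative Leibniz rule for $\H$-valued forms, and the identification $w\leftrightarrow\iota(w)\dvol_M$ of divergence free fields with closed $2$-forms --- needs some care, but there are no analytic surprises: no Sard--Smale argument, no transversality in infinite dimensions, and no unique continuation theorem are required.
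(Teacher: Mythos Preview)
Your argument is correct, but it takes a genuinely different and considerably longer route than the paper. The paper's proof is constructive rather than by contradiction: given any $v\in\sV$, it exhibits a \emph{specific} perturbation direction, namely $w=(w_1,w_2,w_3)$ with $w_i:=[v_j,v_k]$ (cyclic), and shows that the crossing form $f\mapsto\Inner{\dd_wf}{f}_{L^2}$ is positive definite on the nonconstant part of $\ker\dd_v$. This follows in one line from the identity $\dd_v\dd_v=-\sL_v-\dd_w$ (equation~\eqref{eq:dd2}), which for $f\in\ker\dd_v$ gives $\Inner{\dd_wf}{f}_{L^2}=\sum_i\Norm{\p_{v_i}f}_{L^2}^2$. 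Then Lemma~\ref{le:RC} on regular crossings finishes the job: $v+sw\in\sV^\reg$ for small $s\ne0$.

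Your contradiction argument first reduces to the statement that no open set can carry a constant kernel dimension $n_0\ge8$, which via Riesz projections forces $\Inner{\dd_{\hat v}f}{f}_{L^2}=0$ for every $f\in\ker\dd_{v_0}$ and every divergence free triple $\hat v$. At this point you could simply take $\hat v=w$ as above and conclude immediately that $\sum_i\Norm{\p_{v_i}f}_{L^2}^2=0$, hence $f$ is constant. Instead you unpack the condition for all $\hat v$, invoke Poincar\'e duality to deduce exactness of the $1$-forms $\beta_a=\inner{J_adf}{f}$, assemble them into $(df)\bar f=d\bar q$, and use $df\wedge d\bar f=0$ plus the frame condition to force $df\equiv0$. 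This is a nice piece of quaternionic linear algebra and the argument checks out (the identity $(df)\bar f=\beta_0-\beta_1\i-\beta_2\j-\beta_3\k$ in fact holds pointwise without the restriction $f\ne0$, so the continuity extension is harmless). What you gain is an argument that uses only the \emph{vanishing} of the crossing form in every direction, without ever identifying a direction in which it is definite; what the paper's approach buys is brevity and a concrete deformation. Both rest on the same underlying mechanism --- the crossing form --- but exploit it from opposite ends.
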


\begin{proof}
Fix a divergence-free frame $v\in\sV$ and 
let $w=(w_1,w_2,w_3)$ be as in~\eqref{eq:w}.
Denote by $L^2_0(M,\H)\subset L^2(M,\H)$ 
and $W^{1,2}_0(M,\H)\subset W^{1,2}(M,\H)$ 
the spaces of functions with mean value zero
and consider the operator family 
$$
D(s) := \dd_{v+sw}:W^{1,2}_0(M,\H)\to L^2_0(M,\H).
$$
The path $s\mapsto D(s)$ is differentiable with 
derivative $\dot D(0)=\dd_w$.
By~\eqref{eq:dd2}, 
$$
\int_M\inner{f}{\dd_wg}\dvol_M
=\int_M\sum_{i=1}^3\inner{\p_{v_i}f}{\p_{v_i}g}\,\dvol_M
$$
for all $f,g\in W^{1,2}_0(M,\H)$ with $\dd_vf=\dd_vg=0$.
Hence the path $s\mapsto D(s)$ has a positive definite 
crossing form at $s=0$. Hence, 
by Lemma~\ref{le:RC}, 
the operator $\dd_{v+sw}:W^{1,2}_0(M,\H)\to L^2_0(M,\H)$ 
is bijective for $s\ne 0$ sufficiently small 
and so $v+sw\in\sV^\reg$ for small $s\ne 0$.
Thus $\sV^\reg$ is dense in $\sV$.  
That $\sV^\reg$ is an open subset of $\sV$ is obvious. 
\end{proof}

The set of divergence-free frames decomposes 
as the disjoint union
\begin{equation}\label{eq:Vk}
\sV = \bigcup_{k=0}^\infty\sV_k,\qquad
\sV_k:=\left\{v\in\sV\,|\,\dim(\ker\,\dd_v) = 4(k+1)\right\}.
\end{equation}
In this notation $\sV^\reg = \sV_0$ and
$\sV^\sing = \bigcup_{k=1}^\infty\sV_k$.
By Lemma~\ref{le:regular}, $\sV_0$ is a dense open subset of $\sV$.

\begin{lemma}\label{le:V1}
$\sV_1$ is a codimension one Fr\'echet submanifold of $\sV$.
\end{lemma}

\begin{proof}
Let $\Om^0_0(M,\H)$ be the space of smooth functions $f:M\to\H$
with mean value zero and define
$$
\sB := \left\{(v,f)\in(\sV_0\cup\sV_1)\times\Om^0_0(M,\H)\,\bigg|\,
\int_M\Abs{f}^2\,\dvol_M=1\right\}.
$$
Since $\sV_0\cup\sV_1$ is an open subset of the Fr\'echet space 
of triples of divergence-free vector fields, 
$\sB$ is a Fr\'echet manifold with tangent spaces 
$$
T_{(v,f)}\sB = \left\{(\vhat,\fhat)\in\Vect(M)^3\times\Om^0_0(M,\H)\,\Bigg|\,
\begin{array}{l}
\cL_{\vhat_i}\dvol_M=0,\\
\int_M\inner{f}{\fhat}\,\dvol_M=0
\end{array}
\right\}.
$$
This space carries a free action of the Lie group $\Sp(1)$ 
(the unit quaternions) by $x_*(v,f):=(v,x_0f+\sum_{i=1}^3x_iI_if)$
for $x=x_0+\i x_1+\j x_2+\k x_3\in\Sp(1)$.  So does the total space 
of the vector bundle $\sE\to\sB$ with fibers
$$
\sE_{(v,f)} := \left\{h\in\Om^0_0(M,\H)\,\bigg|\,
\int_M\inner{h}{I_if}\,dvol_M=0\mbox{ for }i=1,2,3\right\}.
$$
This bundle has a natural $\Sp(1)$-equivariant section 
$$
\sS:\sB\to\sE,\qquad
\sS(v,f) := \dd_vf.
$$
The intrinsic differential of $\sS$ at a zero $(v,f)\in\sB$ 
is the linear operator
$$
\sD_{(v,f)}:T_{(v,f)}\sB\to\sE_{(v,f)},\qquad
\sD_{(v,f)}(\vhat,\fhat) := \dd_v\fhat +\dd_{\vhat}f.
$$
The kernel of the operator $\dd_v:\Om^0_0(M,\H)\to\Om^0_0(M,\H)$
is equal to its cokernel, has dimension four, and is spanned by 
$f,I_1f,I_2f,I_3f$ whenever $(v,f)\in\sB$ and $\dd_vf=0$. 
The summand $\fhat\mapsto\dd_v\fhat$ in $\sD_{(v,f)}$ 
is restricted to a codimension one subspace of $\Om^0_0(M,\H)$ 
(the $L^2$ orthgonal complement of $f$) while the target
is restricted to the codimension three subspace $\sE_{(v,f)}$ 
(the $L^2$ orthogonal complement of $I_1f,I_2f,I_3f$).
Thus its kernel has dimension three, its cokernel has dimension one,
and so its Fredholm index is two. Hence the projection
\begin{equation}\label{eq:two}
\left\{(\vhat,\fhat)\in T_{(v,f)}\sB\,|\,
\dd_v\fhat +\dd_{\vhat}f=0\right\}\to T_v\sV:
(\vhat,\fhat)\mapsto\vhat
\end{equation}
is a Fredholm operator of Fredholm index two. (See~\cite[Lemma~A.3.6]{MS}.)

Let $(v,f)\in\sB$ such that $\dd_vf=0$.  Then $\sD_{(v,f)}$ is surjective.
To see this, observe that $\sD_{(v,f)}:T_{(v,f)}\sB\to\sE_{(v,f)}$
has a closed image, by standard elliptic theory, and hence
it suffices to prove that the image is dense.  
Thus let $h\in L^2(M;\H)$ be a function 
with mean value zero, orthogonal to $I_1f,I_2f,I_3f$
(i.e.\ in the $L^2$ completion of $\sE_{(v,f)}$), and
$L^2$ orthogonal to the image of $\sD_{(v,f)}$.  Then
\begin{equation}\label{eq:h1}
\int_Mh\,\dvol_M=0,\qquad
\int_M\inner{h}{I_if}\dvol_M=0\;\;\;
\mbox{ for }i=1,2,3,
\end{equation}
\begin{equation}\label{eq:h2}
\int_M\inner{h}{\dd_v\fhat}\dvol_M=0\;\;\;
\mbox{ for }\fhat\in\Om^0_0(M,\H)
\mbox{ with }\int_M\inner{f}{\fhat}\dvol_M=0,
\end{equation}
\begin{equation}\label{eq:h3}
\int_M\inner{h}{\dd_\vhat f}\dvol_M=0
\;\;\;\mbox{ for }\vhat\in T_v\sV. 
\end{equation}
It follows from~\eqref{eq:h2} and elliptic regularity that $h:M\to\H$
is smooth and that $\dd_vh-h_0\in\R f$ for some element $h_0\in\H$.
Since $\dd_vf=0$ it follows that $\dd_v\dd_vh=0$ and hence $\dd_vh=0$. 
(Take the $L^2$-inner product with $h$ and integrate by parts.) 
Since the kernel of $\dd_v:\Om^0_0(M,\H)\to\Om^0_0(M,\H)$
is spanned by $f,I_1f,I_2f,I_3f$ it follows from~\eqref{eq:h1} 
that $h=\lambda f$ for some $\lambda\in\R$.
Hence it follows from~\eqref{eq:h3}, with $\vhat=w$ given 
by~\eqref{eq:w}, and from~\eqref{eq:dd2} that
$$
0 = \int_M\inner{h}{\dd_wf}\dvol_M
= \int_M\sum_{i=1}^3\inner{\p_{v_i}h}{\p_{v_i}f}\,\dvol_M
= \lambda\int_M\Abs{df}^2\,\dvol_M.
$$
Since $f$ is nonconstant, this implies $\lambda=0$ and hence $h=0$.

This shows that the operator $\sD_{(v,f)}:T_{(v,f)}\sB\to\sE_{(v,f)}$
is surjective for all $(v,f)\in\sB$ with $\dd_vf=0$.
Via Sobolev completion and the infinite-dimensional 
implicit function theorem, it follows that the set 
$$
\sP:=\bigl\{(v,f)\in\sB\,\big|\,\dd_vf=0\bigr\}
$$
is a Fr\'echet submanifold of $\sB$ with tangent spaces
$$
T_{(v,f)}\sP = \bigl\{(\vhat,\fhat)\in T_{(v,f)}\sB\,\big|\,
\dd_v\fhat +\dd_{\vhat}f=0\bigr\}.
$$
The group $\Sp(1)$ acts freely on $\sP$ and the quotient
space $\sP/\Sp(1)$ is homeo\-morphic to $\sV_1$ 
via the projection $\pi:\sP\to\sV$ defined by $\pi(v,f):=v$.
The derivative of $\pi$ is the linear operator 
$d\pi(v,f):T_{(v,f)}\sP\to T_v\sV$ given by $d\pi(v,f)(\vhat,\fhat)=\vhat$. 
This is a Fredholm operator of index two (equation~\eqref{eq:two}) 
and it has a three-dimensional kernel.  
Hence its cokernel has dimension one.  This implies, again via 
suitable Sobolev completions, that the map $\pi:\sP\to\sV$ 
descends to an injective immersion 
$\iota:\sP/\Sp(1)\to\sV$
with image $\sV_1$ and derivative of Fredholm index $-1$.  
The immersion $\iota$ is obviously proper
(compact subsets of $\sV_1$ have compact preimages 
in $\sP/\Sp(1)$) and hence $\iota$ is an embedding.  
This proves Lemma~\ref{le:V1}.
\end{proof}

\begin{conjecture}\label{con:strat}
$\sV_k$ is a Fr\'echet submanifold
of $\sV$ of codimension $2k^2-k$.
\end{conjecture}

Conjecture~\ref{con:strat} is motivated by an analogous result 
for quaterni\-onic-hermitian matrices and by Lemma~\ref{le:V1}.
If the conjecture is true, then every path $s\mapsto v^s$ in~$\sV$
with endpoints in $\sV_0$ is homotopic, with fixed endpoints, 
to a path that is transverse to $\sV_1$ and misses $\sV_k$ for $k>1$.
The resulting path $s\mapsto\dd_{v^s}$ of self-adjoint Fredholm 
operators then has only regular crossings and its spectral flow 
is the intersection number with~$\sV_1$.

\begin{remark}[{\bf Spectral flow}]\label{rmk:specflow}\rm
A loop of divergence-free frames with non\-zero spectral flow, 
if it exists, has infinite order in~$\pi_1(\sV)$. The existence 
of such a loop would prove that $\sV^\sing\ne\emptyset$.  
If the fundamental group of a connected component
$\sV'\subset\sV$ is finite and 
$
\sV^\sing\cap\sV'\ne\emptyset,
$
then the spectral flow of a path with endpoints in $\sV^\reg\cap\sV'$ 
depends only on the endpoints, so $\sV^\reg\cap\sV'$ is disconnected.
(See Example~\ref{ex:S3sing} below for $M=S^3$.)
\end{remark}

The next lemma relates the topology of $\sV$ to the topology
of the group of gauge transformations 
\begin{equation}\label{eq:G}
\sG := \Map(M,\SO(3)).
\end{equation}
The identity component of $\sG$ will be denoted by 
$$
\sG_0:=\left\{g:M\to\SO(3)\,\big|\,g
\mbox{ lifts to a degree zero map }\tg:M\to\Sp(1)\right\}
$$
and the group of based gauge transformations,
associated to $p^*\in M$, by
$$
\sG^* := \left\{g:M\to\SO(3)\,|\, g(p^*)=\one\right\}.
$$
Their intersection is the group 
\begin{equation}\label{eq:G*)}
\sG_0^* := \sG_0\cap\sG^*
\cong\left\{\tg:M\to\Sp(1)\,\big|\,\deg(\tg)=0,\,\tg(p^*)=1\right\}.
\end{equation}

\begin{lemma}\label{le:topology}
{\bf (i)}
$\sV$ is homotopy equivalent to $\sG$ and
each connected component of $\sV$ is homotopy 
equivalent to $\sG^*_0\times\SO(3)$.

\smallskip\noindent{\bf (ii)}
There is a short exact sequence
\begin{equation}\label{eq:Vpi0}
\xymatrix    
@C=30pt    
@R=20pt    
{ 
& & \pi_0(\sV)\ar[d]_{\cong} \\
0\ar[r] 
& \Z\ar[r]
& \pi_0(\sG) \ar[r]
& H^1(M;\Z_2)\ar[r]
& 0 
},
\end{equation}
where the homomorphism $\Z\to\pi_0(\sG)$ assigns to an integer $d$
the homotopy class of maps $M\to\SO(3)$ that lift to degree-$d$
maps $M\to\Sp(1)$, and the homomorphism 
$\pi_0(\sG)=\pi_0(\sG^*)\to\Hom(\pi_1(M,p^*),\Z_2)=H^1(M;\Z_2)$
assigns to a based gauge transformation $g:M\to\SO(3)$
the induced homomorphism $g_*:\pi_1(M,p^*)\to\pi_1(\SO(3),\one)=\Z_2$.

\smallskip\noindent{\bf (iii)}
Let $\sV'$ be a connected component of $\sV$.
Then 
$$
\pi_1(\sV') \cong \pi_1(\sG_0^*)\times\Z_2.
$$
If $M=S^3$ then $\pi_1(\sV') \cong\Z_2\times\Z_2$. 

\smallskip\noindent{\bf (iv)}
Let $M=S^3=\Sp(1)\subset\H$ be the unit sphere in the quaternions
and denote by $\sV'\subset\sV$ the connected component of the 
standard frame of $TM$. Then the inclusion
\begin{equation}\label{eq:SO3V}
\SO(3)\to\sV':A\mapsto v_A=(v_{A,1},v_{A,2},v_{A,3}),
\end{equation}
which assigns to a matrix $A=(a_{ij})_{1\le i,j\le3}\in\SO(3)$ 
the divergence-free frame defined by 
$v_{A,i}(y) := a_{1i}\i y + a_{2i}\j y + a_{3i}\k y$
for $y\in S^3$ and $i=1,2,3$, 
induces an isomorphism on rational homology. 
\end{lemma}

\begin{proof}
By Theorem~\ref{thm:gromov} the inclusion of~$\sV$ into the space 
$\sF$ of positive global frames is a homotopy equivalence.
So is the inclusion into $\sF$ of the space of positive global 
orthonormal frames (associated to a Riemannian metric). 
The latter is homeo\-morphic to the gauge group $\sG$.
Hence $\sV$ is homotopy equivalent to $\sG$.
An explicit homotopy equivalence from $\sV$ to $\sG$ can be
constructed by fixing a positive global frame of $TM$ and defining
\begin{equation}\label{eq:VG}
\sV\to\sG:v\mapsto g_v:=A_v(A_v^TA_v)^{-1/2}, 
\end{equation}
where $A_v:M\to\GL^+(\R^3)$ is the gauge transformation 
relating $v$ to the reference frame.  
Now the map
$$
\sG\to\sG^*\times\SO(3):g\mapsto (gg(p^*)^{-1},g(p^*))
$$
is a homeomorphism and restricts to a homeomorphism
$\sG_0\cong\sG^*_0\times\SO(3)$.  This proves~(i).

Next observe that a based gauge transformation 
$g:M\to\SO(3)$ lifts to a map $\tg:M\to\Sp(1)$ 
if and only if the induced map 
$$
g_*:\pi_1(M,p^*)\to\pi_1(\SO(3),\one)=\Z_2
$$ 
on fundamental groups is trivial.
Hence the kernel of the map
$$
\sG^*\to \Hom(\pi_1(M,p^*),\Z_2)=H^1(M,\Z_2):g\mapsto g_*
$$
is isomorphic to the subgroup of all based
gauge transformations that lift to maps
$M\to S^3$. Hence exactness at $\Z$ 
and exactness at $\pi_0(\sG)\cong\pi_0(\sG^*)$ 
follow from the Hopf degree theorem.
Exactness at $H^1(M;\Z_2)$ follows by considering
the subgroup of gauge transformations with values in the
standard circle in $\SO(3)$.  This proves~(ii).

It follows immediately from~(i) that
$
\pi_1(\sV') \cong \pi_1(\sG_0^*)\times\Z_2.
$
If $M=S^3$ then 
$
\pi_1(\sG_0^*)\cong\pi_4(S^3)\cong\Z_2
$
and hence $\pi_1(\sV') \cong\Z_2\times\Z_2$.  
This proves~(iii). 

Assertion~(iv) follows from~(i) 
and a result of Donaldson and Kronheimer
(Lemma~5.1.14 in~\cite{DK}), which asserts that
the group 
$
\sG_0^*\cong(\Om^3S^3)_0
$
has the rational homology of a point.  
This proves Lemma~\ref{le:topology}.
\end{proof}

\begin{remark}[{\bf K-theory}]\label{rmk:Ktheory}\rm
The positive divergence-free frames $v\in\sV$ parametrize
the {\it universal family} of (self-adjoint) Fueter 
operators $\dd_v$. This family determines a K-theory class 
$$
\INDEX(\dd)\in K^{-1}(\sV)=K^0(S^1\times\sV)
$$
(see Atiyah--Patodi--Singer~\cite{APS}).
Its Chern character determines an odd-dimensional  
cohomology class $\ch(\INDEX(\dd))\in H^\odd(\sV;\Q)$
(given by the spectral flow in degree one).
When $M=S^3$ and $\sV'$ is the connected component 
of the standard frame, it follows from 
Lemma~\ref{le:topology}~(iv) that the restriction 
of the class $\ch(\INDEX(\dd))$ to $\sV'$ is 
determined by its pullback under the embedding 
$\SO(3)\to\sV'$ in~\eqref{eq:SO3V}.
Since the image of this embedding is contained in $\sV_0$ 
(see Example~\ref{ex:S3} below), the dimension of the kernel 
of $\dd_v$ is constant along this embedding and hence its class 
in $K^{-1}(\SO(3))$ is trivial (see Ebert~\cite[Theorem~4.2.1]{E}).  
Hence the Chern character of the K-theory class 
$\INDEX(\dd)\in K^{-1}(\sV')$ vanishes in $H^\odd(\sV';\Q)$,
when $M=S^3$, and thus the K-theory class itself vanishes
in $K^{-1}(\sV')$ modulo torsion. 
\end{remark}

\begin{remark}[{\bf Dual frame}]\label{rmk:dualframe}\rm
Let $v\in\sV$ and denote by $\alpha_1,\alpha_2,\alpha_3\in\Om^1(M)$ 
the dual frame so that $\alpha_i(v_j)=\delta_{ij}$.
Define a metric on $M$ by
\begin{equation}\label{eq:metric}
\inner{u}{v} := \sum_{i=1}^3\alpha_i(u)\alpha_i(v),\qquad
u,v\in T_yM.
\end{equation}
Then the vector fields $v_1,v_2,v_3$ form an 
orthonormal frame and the volume form is 
$\alpha_1\wedge\alpha_2\wedge\alpha_3$.
It does not, in general, agree with $\dvol_M$.  
They agree up to a constant factor if and only if the $2$-forms
$\alpha_j\wedge\alpha_k$ are closed.
To see this, define $\lambda:=\dvol_M(v_1,v_2,v_3)$.
Then $\iota(v_i)\dvol_M=\lambda\alpha_j\wedge\alpha_k$
for every cyclic permutation $i,j,k$ of $1,2,3$.
If $\alpha_j\wedge\alpha_k$ is closed for $j\ne k$
it follows that $d\lambda\wedge\alpha_j\wedge\alpha_k=0$,
hence $\p_{v_i}\lambda=0$ for all $i$, and hence $\lambda$ 
is constant. 
\end{remark}

\begin{remark}[{\bf Laplace--Beltrami operator}]\label{rmk:LB}\rm
The pointwise norm of the differential $df$ of a function
$f:M\to\H$ with respect to the metric~\eqref{eq:metric} is 
given by $\Abs{df}^2=\sum_i\Abs{\p_{v_i}f}^2$.  If the function 
$\lambda := \dvol_M(v_1,v_2,v_3)$ is constant, then~$\sL_v$ 
is the Laplace--Beltrami operator of the metric~\eqref{eq:metric}.  
Otherwise it is the composition of $d$ and its formal adjoint with 
respect to the $L^2$ inner products on functions and $1$-forms
associated to the pointwise inner products of the 
metric~\eqref{eq:metric} and the volume form $\dvol_M$.
It is then related to the Laplace--Beltrami operator by the formula
$$
\sL_vf = -\frac{1}{\lambda}d^*(\lambda df) = -d^*df 
+ \frac{1}{\lambda}\sum_i(\p_{v_i}\lambda)\p_{v_i}f
$$
for $f\in\Om^0(M,\H)$. 
\end{remark}

\begin{definition}
A divergence-free frame $(v_1,v_2,v_3)\in\sV(M,\dvol_M)$
is called {\bf normal} if $\dvol_M(v_1,v_2,v_3)=1$.
\end{definition}

\begin{lemma}\label{le:normal}
Every positive regular (respectively singular) diver\-gence-free frame
$v\in\sV(M,\dvol_M)$ can be deformed through regular (respectively singular)
divergence-free frames to a normal regular (respectively singular)
diver\-gence-free frame.
\end{lemma}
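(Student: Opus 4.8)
The plan rests on the observation that $\dim\ker\dd_v$ --- hence whether $v$ is regular or singular --- is invariant under two operations on frames. First, multiplication by a positive function: $\dd_{\mu v}=\mu\,\dd_v$, so $\ker\dd_{\mu v}=\ker\dd_v$. Second, pushforward by an orientation preserving diffeomorphism $\phi$: $\dd_{\phi_*v}f=\bigl(\dd_v(f\circ\phi)\bigr)\circ\phi^{-1}$, so $f\mapsto f\circ\phi$ is a linear bijection $\ker\dd_{\phi_*v}\to\ker\dd_v$ carrying constants to constants. Neither operation preserves membership in $\sV$ by itself, but I would combine them --- rescaling by the reciprocal of the Jacobian of $\phi$ --- so that the divergence free condition survives; the resulting frame then lies in the same stratum $\sV_k$ as $v$ and can be arranged to be normal.

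Concretely, I would first record the following normal form. Let $\phi\colon M\to M$ be an orientation preserving diffeomorphism with $\phi^*\dvol_M=\rho\,\dvol_M$ for a positive function $\rho$, let $c>0$ be a constant, and set $v_i':=\phi_*\bigl((c/\rho)\,v_i\bigr)$ for $i=1,2,3$. From $\iota(\phi_*X)\dvol_M=(\phi^{-1})^*\bigl(\iota(X)\,\phi^*\dvol_M\bigr)$ one computes
$$
\iota(v_i')\dvol_M=(\phi^{-1})^*\!\Bigl(\tfrac c\rho\,\iota(v_i)(\rho\,\dvol_M)\Bigr)=c\,(\phi^{-1})^*\bigl(\iota(v_i)\dvol_M\bigr),
$$
which is closed since $\iota(v_i)\dvol_M$ is, while a pointwise change of variables gives
$$
\dvol_M(v_1',v_2',v_3')=c^3\,\Bigl(\tfrac{\lambda}{\rho^2}\Bigr)\circ\phi^{-1}>0,\qquad\lambda:=\dvol_M(v_1,v_2,v_3).
$$
Hence $v'=(v_1',v_2',v_3')\in\sV$, and by the first paragraph $\dim\ker\dd_{v'}=\dim\ker\dd_v$; moreover $v'$ is normal precisely when $\rho^2=c^3\lambda$.

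Next I would choose $c>0$ by $c^{3/2}=\bigl(\int_M\dvol_M\bigr)\big/\bigl(\int_M\sqrt\lambda\,\dvol_M\bigr)$, so that the positive function $\rho:=c^{3/2}\sqrt\lambda$ satisfies $\int_M\rho\,\dvol_M=\int_M\dvol_M$. Moser's theorem then produces an isotopy $\{\phi_s\}_{0\le s\le1}$ of orientation preserving diffeomorphisms with $\phi_0=\id$ and $\phi_1^*\dvol_M=\rho\,\dvol_M$. Writing $\phi_s^*\dvol_M=\rho_s\,\dvol_M$ (so $\rho_0\equiv1$, $\rho_1=\rho$) and choosing a smooth path of positive constants $c_s$ with $c_0=1$, $c_1=c$, I would set
$$
v^s:=\bigl(\phi_{s*}((c_s/\rho_s)v_1),\,\phi_{s*}((c_s/\rho_s)v_2),\,\phi_{s*}((c_s/\rho_s)v_3)\bigr),\qquad 0\le s\le1.
$$
By the normal form computation --- which used only that $c_s$ is constant on $M$ --- each $v^s$ lies in $\sV$ with $\dim\ker\dd_{v^s}=\dim\ker\dd_v$, so $s\mapsto v^s$ is a path within the single stratum $\sV_k$ containing $v$, with $v^0=v$ and $v^1$ normal. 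In particular it stays in $\sV^\reg$ when $v$ is regular and in $\sV^\sing$ when $v$ is singular, which proves the lemma.

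The equivariance identities for $\dd$, the interior-product identity, and the change-of-variables formula are routine. The only external input is Moser's theorem, and the step to be careful about is the bookkeeping of Jacobians under $\phi$ versus $\phi^{-1}$: one needs $\phi$ with $\phi^*\dvol_M=\rho\,\dvol_M$ for the prescribed $\rho=c^{3/2}\sqrt\lambda$ (rather than for its reciprocal), which is exactly why $c$ must be fixed by the total-volume normalization before Moser's theorem is applied.
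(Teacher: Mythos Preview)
Your proof is correct and follows essentially the same approach as the paper: both use that $\ker\dd_v$ is preserved under multiplication by a positive function and under pushforward by a diffeomorphism, combine these with Moser's theorem, and arrive at the scaling factor $\sqrt\lambda$ (up to a normalizing constant). The only organizational difference is that the paper first rescales $v$ to a frame $v_t=c_t^{-1/3}\lambda^{-t/2}v$ that is divergence free for a modified volume form $\rho_t=c_t\lambda^{t/2}\dvol_M$ and then applies Moser to pull $\rho_t$ back to $\dvol_M$, whereas you keep the volume form fixed throughout and package the scaling and pushforward into a single formula $v^s=\phi_{s*}\bigl((c_s/\rho_s)v\bigr)$; the two constructions unwind to the same path of frames up to the direction of the Moser isotopy.
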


\begin{proof}
Let $v\in\sV^\reg(M,\dvol_M)$ and 
$\lambda:=\dvol_M(v_1,v_2,v_3)$. Define
$$
\rho_t := c_t\lambda^{t/2}\dvol_M,\qquad 
v_{i,t} := c_t^{-1/3}\lambda^{-t/2}v_i,\qquad 
c_t := \frac{\int_M\dvol_M}{\int_M\lambda^{t/2}\dvol_M}.
$$
Then $v_t:=(v_{1,t},v_{2,t},v_{3,t})\in\sV(M,\rho_t)$ for every $t$. 
Second, $v_t$ is equal to $v$ for $t=0$ and is a normal frame for $t=1$.
Third, $v_t$ is regular for every $t$, 
because $\ker\dd_{v_t}=\ker\dd_v$.
Fourth $\rho_0=\dvol_M$ and $\int_M\rho_t=\int_M\dvol_M$ 
for all~$t$.  By Moser isotopy, 
choose a smooth isotopy $[0,1]\to\Diff(M):t\mapsto\phi_t$ 
such that $\phi_0=\id$ and $\phi_t^*\rho_t=\dvol_M$ 
for all $t$. The required isotopy of regular frames 
is $\phi_t^*v_t\in\sV^\reg(M,\dvol_M)$, $0\le t\le 1$. 
Namely, the frame $\phi_t^*v_t$ is regular for 
every $t$ because 
$$
\ker\,\dd_{\phi_t^*v_t}
= \phi_t^*\ker\,\dd_{v_t}
= \phi_t^*\ker\,\dd_v
= 0.
$$
This proves Lemma~\ref{le:normal}.
\end{proof}

    
\section{Examples} \label{sec:EX}  

Fix a divergence-free frame
$v\in\sV(M,\dvol_M)$ and let $\alpha_1,\alpha_2,\alpha_3\in\Om^1(M)$
be the dual frame (see Remark~\ref{rmk:dualframe}). 
For $i=1,2,3$ define $\om_i\in\Om^2(\H)$ by
$\om_i=dx_0\wedge dx_i+dx_j\wedge dx_k$,
where $i,j,k$ is a cyclic permutation of $1,2,3$.
Let $f:M\to\H$ be a smooth map and abbreviate
$\Abs{df}^2:=\sum_i\Abs{\p_{v_i}f}^2$. Then 
there is an {\bf energy identity}
\begin{equation}\label{eq:ENERGY}
\frac12\int_M\Abs{df}^2\,\dvol_M
= \frac12\int_M\Abs{\dd_vf}^2\,\dvol_M
- \sum_i\int_M\alpha_i\wedge f^*\om_i.
\end{equation}
This continues to hold for maps from $M$
to any hyperk\"ahler manifold $X$. 

\begin{example}\label{ex:T3}\rm
Consider the $3$-torus $M=\T^3=\R^3/\Z^3$.
Every linearly independent triple of 
constant vector fields $v_1,v_2,v_3$
is a regular divergence-free frame.
Namely, the dual frame consists of closed $1$-forms,
hence the second term on the right in~\eqref{eq:ENERGY} 
vanishes for functions $f:M\to\H$, and hence
every solution of the Fueter equation is constant. 
\end{example}

\begin{example}\label{ex:S3}\rm
Consider the $3$-sphere $M=S^3\subset\H$
with the volume form
$
\dvol_M = y_0dy_1dy_2dy_3
-y_1dy_0dy_2dy_3
-y_2dy_0dy_3dy_1
-y_3dy_0dy_1dy_2
$
and the frame
$$
v_1(y) = \i y,\qquad 
v_2(y) = \j y,\qquad
v_3(y) = \k y.
$$
The $v_i$ are divergence-free 
and $\dvol_M(v_1,v_2,v_3)=1$.
The dual frame is 
\begin{equation*}
\begin{split}
\alpha_1 &= y_0dy_1-y_1dy_0+y_2dy_3-y_3dy_2, \\
\alpha_2 &= y_0dy_2-y_2dy_0+y_3dy_1-y_1dy_3, \\
\alpha_3 &= y_0dy_3-y_3dy_0+y_1dy_2-y_2dy_1.
\end{split}
\end{equation*}
Note that $[v_j,v_k]=2v_i$ and 
$d\alpha_i=2\alpha_j\wedge\alpha_k=2\iota(v_i)\dvol_M$ 
for every cyclic permutation $i,j,k$ of $1,2,3$.  
The energy identity~\eqref{eq:ENERGY} has the form
$$
\frac12\int_M\Abs{df}^2\dvol_M
=\frac12\int_M\Abs{\dd_vf}^2\dvol_M 
+ \int_M\inner{f}{\dd_vf}\dvol_M.
$$
(See~\eqref{eq:ACTION} and~\eqref{eq:action} below.)
Hence every solution $f:M\to\H$ of
the Fueter equation is constant, so $v$ is a 
normal regular divergence-free frame.
\end{example}

\begin{example}\label{ex:S3sing}\rm
Consider the $3$-sphere $M=S^3\subset\H$. The frame
$$
v_1(y) = 2^{2/3}\i y,\qquad 
v_2(y) = -2^{-1/3}\j y,\qquad
v_3(y) = -2^{-1/3}\k y,
$$
is a normal singular divergence-free frame on the $3$-sphere
for the standard volume form.  The obvious inclusion
$f:S^3\to\H$ is a nonconstant solution of the Fueter equation.

This example shows that $\sV^\sing\cap\sV'\ne\emptyset$, where 
$\sV'\subset\sV$  denotes the connected component of the standard 
frame on $S^3$. Since $\pi_1(\sV')\cong\Z_2\times\Z_2$ is a finite 
group (see Lemma~\ref{le:topology}~(iii)) it follows that 
$\sV^\reg\cap\sV'$ is disconnected (see Remark~\ref{rmk:specflow}).  
\end{example}

\begin{example}\label{ex:S1S2}\rm
Consider the product $M:=S^1\times S^2$, where 
$S^1$ is the unit circle with coordinate $e^{\i\theta}$ 
and $S^2\subset\R^3$ is the unit sphere with 
coordinates $y_1,y_2,y_3$. The standard volume form is 
$\dvol_M:=d\theta\wedge\dvol_{S^2}$, where
$
\dvol_{S^2} := y_1dy_2dy_3+y_2dy_3dy_1+y_3dy_1dy_2.
$
Define $v_i,w_i,\alpha_i,\beta_i$ by
\begin{equation*}
\begin{split}
v_1 := y_1\frac{\p}{\p\theta} 
+ y_2\frac{\p}{\p y_3} - y_3\frac{\p}{\p y_2},\qquad
&\alpha_1 := y_1d\theta + y_2dy_3-y_3dy_2,\\
w_1 := 2y_1\frac{\p}{\p\theta} 
+ y_2\frac{\p}{\p y_3} - y_3\frac{\p}{\p y_2},\qquad
&\beta_1 := y_1d\theta + \tfrac12\left(y_2dy_3-y_3dy_2\right),
\end{split}
\end{equation*}
for $i=1$ and by cyclic permutation for $i=2,3$. 
Then $v_1,v_2,v_3$ is a divergence-free orthonormal frame 
and $\alpha_1,\alpha_2,\alpha_3$ is the dual frame. Moreover,
$$
w_i=[v_j,v_k],\qquad
d\beta_i=\alpha_j\wedge\alpha_k=\iota(v_i)\dvol_M
$$ 
for every cyclic permutation $i,j,k$ of $1,2,3$. 

The energy identity~\eqref{eq:ENERGY} takes the form
\begin{equation}\label{eq:energyS1S2}
\frac12\int_M\Abs{df}^2\dvol_M = 
\frac12\int_M\Abs{\dd_vf}^2\dvol_M
+ \frac12\int_M\inner{f}{\dd_vf}\dvol_M + \widehat{\sA}(f),
\end{equation}
where 
$$
\widehat\sA(f) := -\sum_i\int_M
\widehat\alpha_i\wedge f^*\om_i,\qquad
\widehat\alpha_i 
:= \tfrac12\left(y_jdy_k-y_kdy_j\right),
$$
for every cyclic permutation $i,j,k$ of $1,2,3$.
For $y\in S^2$ define 
$$
\om_y := y_1\om_1+y_2\om_2+y_3\om_3\in\Om^2(\H).
$$
Then
\begin{equation}\label{eq:Ahat}
\widehat{\sA}(f) 
= \frac12\int_{S^2}\int_0^{2\pi}
\om_y(\p_\theta f,f)\,d\theta\,\dvol_{S^2}.
\end{equation}
(This discussion extends to maps with values in any hyperk\"ahler
manifold~$X$, if the second summand in~\eqref{eq:energyS1S2}
is replaced by $\sA(f):=-\sum_i\int_M\beta_i\wedge f^*\om_i$
and the integrand in~\eqref{eq:Ahat} by the symplectic action of 
the loop $\theta\mapsto f(e^{\i\theta},y)$ with respect to $\om_y$.)

The isoperimetric inequality asserts that
\begin{equation}\label{eq:isoperimetric}
\frac12\int_0^{2\pi}\om_y(\p_\theta f,f)\,d\theta
\le \frac12\int_0^{2\pi}\Abs{\p_\theta f}^2\,d\theta,\qquad
y\in S^2.
\end{equation}
This inequality is sharp (see~\cite[Section~4.4]{MS}). 
Now let 
$
f:S^1\times S^2\to\H
$
be a solution of the Fueter equation.  
By~\eqref{eq:energyS1S2}, \eqref{eq:Ahat}, 
and~\eqref{eq:isoperimetric} it satisfies 
\begin{equation}\label{eq:S1S2}
\frac12\int_M\Abs{df}^2\dvol_M
=\widehat{\sA}(f) \le \frac12\int_M\Abs{\p_\theta f}^2\dvol_M.
\end{equation}
Since 
$$
\frac{\p}{\p\theta} = \sum_iy_iv_i
$$ 
is a unit vector field, equality must hold 
in~\eqref{eq:S1S2} and 
$$
\abs{df}\equiv\Abs{\p_\theta f}.
$$ 
Hence $f$ is independent of the variable in $S^2$.  
Thus the derivative of $f$ has rank one everywhere and so $f$ is constant.  
This shows that $v_1,v_2,v_3$ is a normal regular divergence-free frame.
\end{example}


\section{Hyperk\"ahler Floer theory}\label{sec:HF} 

Fix a closed connected oriented three-manifold $M$, 
a volume form $\dvol_M$,
a divergence-free frame $v\in\sV(M,\dvol_M)$,
and a closed hyperk\"ahler mani\-fold~$X$ with symplectic forms 
$\om_1,\om_2,\om_3$ and complex structures $J_1,J_2,J_3$.
Denote the hyperk\"ahler metric on~$X$ by 
$\inner{\cdot}{\cdot}=\om_i(\cdot,J_i\cdot)$.
Consider the Fueter equation
with a {\bf Hamiltonian perturbation}.  It has the form
\begin{equation}\label{eq:fueterH}
J_1\p_{v_1}f + J_2\p_{v_2}f + J_3\p_{v_3}f = \nabla H(f)
\end{equation}
for a map $f:M\to X$.  The left hand side 
of~\eqref{eq:fueterH} will still be denoted by $\dd_vf$. 
The operator $\dd_v$ should now be thought 
of as a vector field on the infinite-dimensional space 
$
\cF:=\Cinf(M,X)
$
of all smooth maps from $M$ to~$X$. The perturbation 
is determined by a smooth function $H:M\times X\to\R$ 
and~$\nabla H$ is the gradient with respect to the variable in~$X$. 
A solution $f$ of~\eqref{eq:fueterH} is called 
{\bf nondegenerate} if the linearized operator 
of~\eqref{eq:fueterH} is bjective.
The next theorem is a hyperk\"ahler analogue of the 
Arnold conjecture for Hamiltonian systems on the torus 
as proved by Conley--Zehnder~\cite{CZ}. 

\begin{theorem}[\cite{HNS1,HNS2,GH1,GH2}]\label{thm:AC}
Assume $v\in\sV^\reg$ and $X$ is flat.  
If every contractible solution of~\eqref{eq:fueterH}
is nondegenerate then their number 
is bounded below by $\dim H^*(X;\Z_2)$.
In particular, \eqref{eq:fueterH} has a contractible
solution for every $H$. 
\end{theorem}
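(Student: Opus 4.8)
The plan is to construct a hyperk\"ahler analogue of Floer homology $\HF_*(M,v,H;\Z_2)$ whose chain complex is generated by the contractible solutions of~\eqref{eq:fueterH}, and to show that this homology is isomorphic to $H_*(X;\Z_2)$. Granting this, the number of generators of the complex is at least $\sum_k\dim\HF_k(M,v,H;\Z_2)=\dim H^*(X;\Z_2)$, which is the assertion; the ``in particular'' then follows because $H^*(X;\Z_2)\ne 0$, so the complex cannot have an empty set of generators.

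First I would set up the variational picture. By the energy identity~\eqref{eq:ENERGY} for maps into $X$ (with correction terms of the type $\beta_i\wedge f^*\om_i$ as in Example~\ref{ex:S1S2}) the solutions of~\eqref{eq:fueterH} are the critical points of an action functional $\sA_H$ on the space of contractible maps $f:M\to X$, or on a suitable covering when the relevant periods do not vanish. Flatness of $X$ is what guarantees that the ``symplectic area'' terms are controlled and that $\sA_H$ is well defined, and it also rules out nonconstant finite energy solutions of the homogeneous Fueter equation on $\H$: on a flat target such a solution would be harmonic of finite energy, hence constant, so there are no Fueter bubbles. The energy identity moreover yields the a priori bound $\tfrac12\int_M\Abs{df}^2\,\dvol_M\le\sA_H(f)+\mathrm{const}$ needed for compactness.

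Next I would build the chain complex. Under the nondegeneracy hypothesis the linearization of~\eqref{eq:fueterH} — a self-adjoint operator modeled on $\dd_v$ as in Lemma~\ref{le:elliptic} — is bijective at every contractible solution, and one shows (compactness, via the energy bound and absence of bubbling) that there are only finitely many such solutions; let $\CF_*(M,v,H;\Z_2)$ be the $\Z_2$-vector space on these generators, graded by a spectral-flow/Conley--Zehnder index which is well defined precisely because $v\in\sV^\reg$ supplies a reference operator $\dd_v$ with trivial kernel on maps of mean value zero (a frame in $\sV^\sing$ would provide no natural base point for the grading). The boundary operator counts isolated finite energy solutions $u:\R\times M\to X$ of the four-dimensional elliptic equation $\p_su + J_1\p_{v_1}u + J_2\p_{v_2}u + J_3\p_{v_3}u = \nabla H(u)$ converging to contractible solutions of~\eqref{eq:fueterH} as $s\to\pm\infty$. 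The standard package — exponential decay at the ends, Fredholm theory with index equal to the difference of the indices of the limits, transversality for a generic Hamiltonian (or a generic compatible perturbation of the $J_i$), and $\p\circ\p=0$ from the description of the ends of the one-dimensional moduli spaces via gluing and compactness — then produces $\HF_*(M,v,H;\Z_2)$, and continuation maps show it is independent of $H$.

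Finally I would compute the homology by degenerating to Morse theory on $X$: take $H(m,x)=\eps h(x)$ with $h:X\to\R$ Morse and $\eps>0$ small. Since $v$ is regular, the restriction of $\dd_v$ to maps of mean value zero is invertible, so for $\eps$ small every solution of $\dd_vf=\eps\nabla h(f)$ is constant, equal to a critical point of $h$, and nondegenerate exactly because $h$ is Morse; an adiabatic-limit argument identifies the connecting trajectories with the negative gradient flow lines of $h$ on $X$, whence $\HF_*(M,v,H;\Z_2)\cong H_*(X;\Z_2)$. Combined with $H$-invariance this gives the bound, in analogy with Conley--Zehnder~\cite{CZ}. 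The main obstacle is analytic: the compactness of the spaces of connecting trajectories of the four-dimensional first order system and the exclusion of energy concentration. This is where both hypotheses are indispensable — flatness of $X$ eliminates Fueter bubbles and makes $\sA_H$ single valued, while $v\in\sV^\reg$ provides the spectral gap of $\dd_v$ at $0$ underlying the Fredholm and exponential-decay estimates and the grading; the adiabatic-limit step is a second delicate point. These are precisely the issues resolved in~\cite{HNS1,HNS2,GH1,GH2}.
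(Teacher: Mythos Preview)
Your overall architecture --- build a Floer complex on contractible solutions, show invariance under $H$, and compute it via a small Morse perturbation --- is exactly the route the paper follows. The gap is in the compactness mechanism. The claim that~\eqref{eq:ENERGY} yields $\tfrac12\int_M\Abs{df}^2\,\dvol_M\le\sA_H(f)+\mathrm{const}$ is not what that identity says: the topological term $-\sum_i\int_M\alpha_i\wedge f^*\om_i$ is not $\sA_H(f)$ (the $\alpha_i$ are not in general primitives of $\iota(v_i)\dvol_M$), and in any case the term $\tfrac12\int_M\Abs{\dd_v u_s}^2$ on the right of~\eqref{eq:ENERGY} is not bounded pointwise in $s$ along a trajectory. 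Ruling out Fueter bubbles is also beside the point: for flat $X$ the four-dimensional equation lifts to a \emph{linear} elliptic system with values in~$\H^n$, so there is no rescaling analysis to perform, but by the same token no automatic passage from an energy bound to local $C^1$ bounds.

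What you are missing is the fundamental estimate of Lemma~\ref{le:estimate}: for $v\in\sV^\reg$ and $X$ flat, every contractible $f$ satisfies $\int_M\Abs{df}^2\le c\int_M\Abs{\dd_vf}^2$. This is where the two hypotheses enter \emph{jointly} --- flatness so that $f$ lifts to~$\H^n$ and $\dd_v$ becomes the honest linear operator of Lemma~\ref{le:elliptic}, regularity of $v$ so that this operator is bijective on mean-zero functions --- and it is precisely what converts the energy identity~\eqref{eq:energyH} into $W^{1,2}$ control of $u$ on compact subsets of $\R\times M$, after which linear elliptic bootstrapping gives compactness. Your attribution of the roles of the hypotheses is therefore off: exponential decay and the Fredholm property at the ends come from the nondegeneracy assumption on the critical points, not from regularity of $v$; and flatness matters not because it excludes bubbles but because it linearises the problem. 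Both hypotheses are needed, together, for Lemma~\ref{le:estimate}, which the paper singles out as the estimate on which the entire construction rests.
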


There are two proofs of Theorem~\ref{thm:AC}. 
One is due to Sonja Hohloch, Gregor Noetzel, 
and the present author. It is based on a hyperk\"ahler 
analogue of Floer theory and is carried out in~\cite{HNS1,HNS2} 
for the $3$-sphere and the $3$-torus. The second proof is 
due to Viktor Ginzburg and Doris Hein, is based on 
finite-dimensional reduction, and is carried out in full 
generality in~\cite{GH1,GH2}. Both proofs rely on the 
following fundamental estimate.

\begin{lemma}\label{le:estimate}
Assume $v\in\sV^\reg$ and $X$ is flat.  Then there is a constant~$c$
such that every contractible smooth map $f:M\to X$ satisfies 
\begin{equation}\label{eq:estimate}
\int_M\Abs{df}^2\,\dvol_M
\le c\int_M\Abs{\dd_vf}^2\,\dvol_M.
\end{equation} 
\end{lemma}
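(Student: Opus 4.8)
The plan is to use flatness of $X$ to reduce~\eqref{eq:estimate} to a purely linear estimate for $\dd_v$ on functions valued in a flat Euclidean hyperk\"ahler vector space, and then to obtain that estimate from the fact that $v\in\sV^\reg$ makes $\dd_v$ invertible modulo constants.

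First I would reduce to a Euclidean target. A closed flat hyperk\"ahler manifold $X$ is complete, so its universal cover is, as a hyperk\"ahler manifold, the standard flat $\H^m$: the metric is Euclidean and the complex structures are parallel, hence constant in linear coordinates, and the covering projection $\pi$ is a hyperk\"ahler local isometry. Since $M$ is connected and $f$ is contractible, $f$ lifts to a smooth map $\tf\colon M\to\H^m$ with $\pi\circ\tf=f$; because $\pi$ is a local isometry intertwining the complex structures, $\Abs{d\tf}=\Abs{df}$ and $\Abs{\dd_v\tf}=\Abs{\dd_vf}$ pointwise on $M$. Hence it suffices to prove~\eqref{eq:estimate}, with a constant $c$ depending only on $v$, for all smooth maps $g\colon M\to\H^m$; and since $\dd_v$ acts diagonally on the $m$ quaternionic factors of $\H^m$, it is enough to treat $g\in\Om^0(M,\H)$ — the constant $c$ then being automatically independent of $X$.

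For $\H$-valued functions the estimate is linear. By Lemma~\ref{le:elliptic} and Lemma~\ref{le:fredholm} the operator $\dd_v\colon W^{1,2}(M,\H)\to L^2(M,\H)$ is a self-adjoint Fredholm operator of index zero, and by Definition~\ref{def:regular} its kernel is exactly the constant functions, since $v\in\sV^\reg$. As the range is closed and $\dd_v$ is self-adjoint, $\range\,\dd_v=(\ker\dd_v)^\perp=L^2_0(M,\H)$, so $\dd_v$ restricts to an isomorphism $W^{1,2}_0(M,\H)\to L^2_0(M,\H)$ of Banach spaces and therefore has bounded inverse; in particular there is $\delta>0$ with $\Norm{\dd_vg}_{L^2}\ge\delta\Norm{g}_{L^2}$ for every $g\in W^{1,2}_0(M,\H)$. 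Given an arbitrary smooth $g$, subtract its constant mean value, which changes neither side of~\eqref{eq:estimate}, so assume $g\in W^{1,2}_0(M,\H)$. Now the identity $\dd_v\dd_v=-\sL_v-\dd_w$ from~\eqref{eq:dd2}, together with integration by parts (legitimate since the $v_i$ are divergence free), gives
\begin{equation*}
\int_M\Abs{dg}^2\,\dvol_M=\Norm{\dd_vg}_{L^2}^2+\Inner{g}{\dd_wg}_{L^2},
\end{equation*}
where $w=(w_1,w_2,w_3)$ is given by~\eqref{eq:w}. Writing each $w_i$ as a combination of the $v_j$ with bounded smooth coefficients yields a pointwise bound $\Abs{\dd_wg}\le C\Abs{dg}$, hence $\Abs{\Inner{g}{\dd_wg}_{L^2}}\le C\Norm{g}_{L^2}\Norm{dg}_{L^2}\le C\delta^{-1}\Norm{\dd_vg}_{L^2}\Norm{dg}_{L^2}$; inserting this and absorbing the cross term by Young's inequality gives~\eqref{eq:estimate} with $c:=2+C^2\delta^{-2}$, depending only on $\dd_v$.

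There is no deep obstacle here: the estimate is elementary once flatness is exploited, and its only substantive ingredient is the spectral gap for $\dd_v$ on mean-zero functions, which is equivalent to regularity of $v$ and is precisely where the hypothesis $v\in\sV^\reg$ enters. The step requiring the most care is the reduction — one must check that contractibility of $f$ is exactly what permits the lift through the universal cover, that the lift preserves both the energy and the Fueter defect, and that the resulting $c$ is uniform in $X$. An alternative, more conceptual route uses the energy identity~\eqref{eq:ENERGY}: for flat $X$ and contractible $f$ the pullbacks $f^*\om_i$ are exact, so the correction term in~\eqref{eq:ENERGY} can be integrated by parts against $d\alpha_i$ and absorbed into the left-hand side; but the linear argument above is shorter.
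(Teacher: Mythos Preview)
Your proof is correct and follows essentially the same route as the paper: reduce via the flat structure and lifting of contractible maps to $\H$-valued functions, then use that $v\in\sV^\reg$ makes $\dd_v$ bijective on mean-zero functions. The only difference is cosmetic: the paper simply invokes the open mapping theorem for $\dd_v:W^{1,2}_0\to L^2_0$ to bound $\Norm{dg}_{L^2}$ by $\Norm{\dd_vg}_{L^2}$ directly, whereas you extract only the $L^2$ spectral gap and then bootstrap to the gradient bound via the identity~\eqref{eq:dd2}; this is a slight elaboration of the same idea and has the minor advantage of producing an explicit constant.
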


\begin{proof}
Since $X$ is flat, it is isomorphic to a quotient of a 
torus $\H^n/\Lambda$ by the free action of a finite group.
Hence every contractible map $f:M\to X$ lifts to a map
from $M$ to $\H^n$.  Thus it suffices to 
prove~\eqref{eq:estimate} for ${f:M\to\H}$.
By Lemma~\ref{le:elliptic}
and Lemma~\ref{le:fredholm}, 
the operator ${\dd_v:W^{1,2}_0(M,\H)\to L^2_0(M,\H)}$ 
is Fredholm and has index zero.
Since $v\in\sV^\reg$, this operator is bijective.  
Hence, for functions $f:M\to\H$ with mean value zero, 
\eqref{eq:estimate} follows from the inverse operator theorem.  
Adding a constant to $f$ does not affect~\eqref{eq:estimate}. 
\end{proof}

Here is an outline of the Floer theory proof of Theorem~\ref{thm:AC}.
The space $\cF$ of maps from $M$ to $X$ carries a natural $1$-form
$\Psi_f:T_f\cF\to\R$ defined by 
\begin{equation}\label{eq:Psi}
\Psi_f(\hat f) = \sum_i\int_M\om_i(\p_{v_i}f,\hat f)\,\dvol_M
\end{equation}
for a vector field $\hat f\in T_f\cF = \Om^0(M,f^*TX)$ along $f$.
This $1$-form is closed because the vector fields $v_i$ 
are divergence-free.

\begin{remark}\label{rmk:action1}\rm
Assume that the $2$-forms $\iota(v_i)\dvol_M$ are exact
and choose $1$-forms $\beta_i\in\Om^1(M)$ 
such that $d\beta_i=\iota(v_i)\dvol_M$ for $i=1,2,3$.
Then the $1$-form $\Psi$ in~\eqref{eq:Psi}
is the differential of the {\bf hyperk\"ahler action functional}
\begin{equation}\label{eq:ACTION}
\sA(f) := - \sum_i\int_M\beta_i\wedge f^*\om_i,\qquad f\in\cF.
\end{equation}
\end{remark}

\begin{remark}\label{rmk:action2}\rm
Assume $X$ is flat and let $\cF_0\subset\cF$ be
the connected component of the constant maps.
Then the restriction of $\Psi$ to $\cF_0$ is the differential 
of the action functional $\sA=\sA_v:\cF_0\to\R$, defined by
\begin{equation}\label{eq:action}
\sA(f):=\frac12\int_M\inner{f}{\dd_vf}\,\dvol_M,\qquad
f\in\cF_0.
\end{equation}
To understand the right hand side, lift $f$ to a function 
with values in~$\H^n$ and observe that the integrand
is invariant under the action of the hyperk\"ahler 
isometry group of $\H^n$.
\end{remark}

Assume from now on that $X$ is flat.
The contractible solutions of~\eqref{eq:fueterH} 
are the critical points of the {\bf perturbed hyperk\"ahler action 
functional} $\sA_H=\sA_{v,H}:\cF_0\to\R$, defined by
$$
\sA_H(f) := \sA(f) - \int_MH(f)\,\dvol_M,\qquad f\in\cF_0.
$$
The gradient flow lines of $\sA_H$ are the solutions $u:\R\times M\to X$
of the perturbed four-dimensional Fueter equation
\begin{equation}\label{eq:floerH}
\p_su+J_1\p_{v_1}u+J_2\p_{v_2}u+J_3\p_{v_3}u = \nabla H(u),
\end{equation}
\begin{equation}\label{eq:limitH}
\lim_{s\to\infty}u(s,y) = f^\pm(y).
\end{equation}
Here $f^\pm\in\cF_0$ are solutions of~\eqref{eq:fueterH}.
The convergence in~\eqref{eq:limitH} is in the $\Cinf$ topology on $M$ 
and exponential in $s$.  The solutions of~\eqref{eq:floerH} 
and~\eqref{eq:limitH} satisfy the usual energy identity
\begin{equation}\label{eq:energyH}
\sE(u) 
:= \int_{-\infty}^\infty\int_M\Abs{\p_su}^2\dvol_M\,ds 
= \sA_H(f^-)-\sA_H(f^+).
\end{equation}
By Lemma~\ref{le:estimate}, the energy controls the $W^{1,2}$-norms
of the solutions of~\eqref{eq:floerH} on every compact subset of $\R\times M$.  
Since the leading term in~\eqref{eq:floerH} is a linear elliptic operator, 
this suffices for the standard regularity and compactness
arguments in symplectic Floer theory to extend to the present setting.  
For $M=S^3$ the proof is carried out in detail in~\cite[Section~3]{HNS2} 
and the arguments extend verbatim to general three-manifolds. 
The same holds for unique continuation and transversality  
in~\cite[Section~4]{HNS2}. An index formula involving 
the spectral flow shows that there is a function 
$
\mu_H:\mathrm{Crit}(\sA_H)\to\Z
$
such that the Fredholm index of the linearized operator of
equation~\eqref{eq:floerH} is equal to the difference 
$\mu_H(f^-)-\mu_H(f^+)$ for every solution $u$ 
of~\eqref{eq:floerH} and~\eqref{eq:limitH}
(see~\cite[Section~4]{HNS2}).  
The third ingredient in the analysis is a gluing result and  
it follows from a standard adaptation of Floer's gluing 
theorem~\cite{F1,F2,F3} to the hyperk\"ahler setting.  
The upshot is, that the contractible solutions 
of~\eqref{eq:fueterH} generate a Floer chain complex
$$
\CFHK_*(M,X,\tau_0;v,H) := \bigoplus_{f\in\mathrm{Crit}(\sA_H)}\Z_2f
$$
with $\Z_2$ coefficients. Here $\tau_0\in\pi_0(\cF)$ 
denotes the homotopy class of the constant maps. 
The Floer complex is graded by the index function $\mu_H$, 
and the boundary operator 
$
\p:\CFHK_k(M,X,\tau_0;v,H)\to\CFHK_{k-1}(M,X,\tau_0;v,H)
$
is defined by the mod two count of the solutions 
of~\eqref{eq:floerH} and~\eqref{eq:limitH} (modulo time shift) 
in the case $\mu_H(f^-)-\mu_H(f^+)=1$.  
The {\bf hyperk\"ahler Floer homology groups}
$$
\HFHK_*(M,X,\tau_0;v,H) := \ker\,\p/\im\,\p
$$
are independent of the regular Hamiltonian perturbation
up to canonical isomorphism.  Theorem~\ref{thm:AC} then 
follows from the fact that $\HFHK_*(M,X,\tau_0;v,H)$ 
is isomorphic to $H_*(X;\Z_2)$. For the standard 
divergence-free frame on the $3$-sphere this was proved 
in~\cite[Section~5]{HNS2} and the argument 
again carries over verbatim to the general setting. 

\begin{remark}\label{rmk:floer1}\rm
Assume $v\in\sV^\reg$, let 
$
X=\H^n/\Lambda
$ 
be a hyperk\"ahler torus,
and let $\cF_\tau\subset\cF$ be a connected component of $\cF$.
Then the unperturbed Fueter equation $\dd_vf=0$ may 
have a nonconstant solution $f_0\in\cF_\tau$.  
(Examples are discussed in~\cite{HNS1,HNS2}.)
In this case~\eqref{eq:estimate} cannot hold for $f\in\cF_\tau$.  
However, $X$ is an additive group and, for $f\in\cF_\tau$, 
the difference $f-f_0$ lifts to a function with values 
in the universal cover $\H^n$.  Hence it follows from 
Lemma~\ref{le:estimate} that there is a constant 
$c_\tau>0$ such that every $f\in\cF_\tau$ 
satisfies the inequality
$$
\int_M\Abs{df}^2\,\dvol_M
\le c_\tau\int_M\left(\Abs{\dd_vf}^2+1\right)\,\dvol_M.
$$
Moreover, the restriction of the $1$-form $\Psi$ in~\eqref{eq:Psi}
to $\cF_\tau$ is still exact.  It is the differential of the action functional
$\sA_\tau:\cF_\tau\to\R$ given by
$$
\sA_\tau(f) := \frac12\int_M\inner{f-f_0}{\dd_vf}\,\dvol_M,\qquad
f\in\cF_\tau.
$$
To understand this, use the fact that the tangent 
bundle of $X$ is trivial, lift~${f-f_0}$ to a function with values 
in~$\H^n$, and note that the integral is independent
of the choice of the lift. Thus the construction 
of the Floer homology groups carries over to $\cF_\tau$. 
In favourable cases the Floer homo\-logy groups 
$\HFHK_*(M,X,\tau;v,H)$ can be computed with the 
methods of~\cite{PO}.  They are invariant under the action 
of the group of volume preserving diffeomorphism of $M$ 
on the set of triples $(\tau,v,H)$.  In general, 
they will not be invariant under deformation 
of the divergence-free frame~$v$.
\end{remark}

\begin{remark}\label{rmk:floer4}\rm
It would be interesting to understand the behavior of
the solutions of the Fueter equation~\eqref{eq:fueterH} 
as $v$ approaches a singular frame. 
\end{remark}

\begin{remark}\label{rmk:floer5}\rm
Another interesting question is whether the construction
of the Floer homology groups can be extended 
to nonflat target manifolds~$X$.  The key obstacle is 
noncompactness for the solutions to the Fueter equation. 
The expected phenomenon, which can be demonstrated in examples, 
is bubbling along codimension two submanifolds of $M$.
Important progress in understanding this phenomenon 
was recently made by Thomas Walpuski~\cite{Wa}.
His work will be an essential ingredient in the conjectural
development of a general hyperk\"ahler Floer theory.
\end{remark}

 
\section{Relation to Donaldson--Thomas theory}\label{sec:DT} 

The discussion in this section is speculative. 
It concerns the relation between the Donaldson--Thomas--Floer 
theory of a product manifold $Y=M\times\Sigma$
(where $\Sigma$ is a hyperk\"ahler $4$-manifold)
and hyperk\"ahler Floer theory.
 
Let $Y$ be a closed connected $7$-manifold.
A $3$-form $\phi\in\Om^3(Y)$ is called {\bf nondegenerate}
if, for any two linearly independent
tangent vectors $u,v$, there is a third tangent
vector $w$ such that $\phi(u,v,w)\ne 0$.
Every nondegenerate $3$-form $\phi$ determines a unique
Riemannian metric on $Y$ such that the bilinear form 
$(u,v)\mapsto u\times v$ on $TY$, 
defined by $\inner{u\times v}{w} := \phi(u,v,w)$,
is a cross product, i.e.\ it satisfies
$|u\times v|^2=|u|^2|v|^2-\langle u,v\rangle^2$
for all $u,v\in T_yY$ (see e.g.~\cite{SW}). 
A nondegenerate $3$-form also determines 
a unique orientation on $Y$ such that the $7$-form 
$\iota(u)\phi\wedge\iota(u)\phi\wedge\phi$ is 
positive for every nonzero tangent vector ${u\in TY}$.
A {\bf $\G_2$-structure} on $Y$ is a nondegenerate 
$3$-form $\phi$ that is harmonic with respect to
the Riemannian metric determined by $\phi$. 

Assume $\phi\in\Om^3(Y)$ is a $\G_2$-structure and
denote $\psi:=*\phi\in\Om^4(Y)$. Fix a compact Lie group $\G$
and let $\cA(Y)$ be the space of connections on a principal 
$\G$-bundle over~$Y$.  For this discussion it suffices to think 
of $\cA(Y)$ as the space of $1$-forms on $Y$ with 
values in the Lie algebra $\g=\Lie(\G)$. 
The {\bf Donaldson--Thomas--Chern--Simons functional}
$
\CS^\psi:\cA(Y)\to\R
$
is defined by
$$
\CS^\psi(\A) 
:= \int_Y\cs_{\A_0}(\A)\wedge\psi
$$
where $\A_0\in\cA(Y)$ is a reference connection and 
$\cs_{\A_0}(\A)\in\Om^3(Y)$ denotes the 
Chern--Simons $3$-form 
$$
\cs_{\A_0}(\A) 
:= \winner{a}{\left(F_{\A_0}
+\tfrac12d_{\A_0}a+\tfrac16[a\wedge a]\right)},\qquad
a:=\A-\A_0. 
$$
A critical point of $\CS^\psi$ is a connection $\A\in\cA(Y)$ 
whose curvature $F_\A$ satisfies
\begin{equation}\label{eq:DT1}
F_\A\wedge\psi = 0.
\end{equation}
The solutions are {\bf $\G_2$-instantons}.
A negative gradient flow line of $\CS^\psi$ is 
(gauge equivalent to) a pair, consisting of a path 
${\R\to\cA(Y):s\mapsto\A(s)}$ of connections and
a path ${\R\to\Om^0(Y,\g):s\mapsto\Phi(s)}$ of 
sections of the Lie algebra bundle, that satisfy 
\begin{equation}\label{eq:DT2}
\p_s\A-d_\A\Phi+*(F_\A\wedge\psi) = 0.
\end{equation}
The basic idea of Donaldson--Thomas theory in the $\G_2$ 
setting is to define Floer homology groups $\HFDT_*(Y)$, 
generated by the gauge equivalence classes of solutions 
of~\eqref{eq:DT1}, with the boundary operator given by 
counting the gauge equivalence classes of 
solutions of~\eqref{eq:DT2} (see~\cite{DT,DSe}).

Now let $M$ be a closed connected oriented $3$-manifold,
equipped with a volume form $\dvol_M$ and a normal 
regular divergence-free frame $v\in\sV^\reg$.
Denote by $\alpha_1,\alpha_2,\alpha_3$ the dual frame in $\Om^1(M)$. 
Then the $2$-forms $\alpha_i\wedge\alpha_j$ are closed
for all $i$ and~$j$ (see Remark~\ref{rmk:dualframe}).
Let $\Sigma$ be a closed connected hyperk\"ahler $4$-manifold
(i.e.\ either a $4$-torus or a $K3$ surface)
with symplectic forms $\sigma_1,\sigma_2,\sigma_3$ 
and complex structures $j_1,j_2,j_3$.
These structures determine a nondegenerate $3$-form
$\phi$ on the product
$
Y:=M\times\Sigma,
$ 
given by 
\begin{equation}\label{eq:phipsi}
\begin{split}
\phi &:= \dvol_M
- \alpha_1\wedge\sigma_1
- \alpha_2\wedge\sigma_2
- \alpha_3\wedge\sigma_3,\\
\psi &:= \dvol_\Sigma 
- \alpha_2\wedge\alpha_3\wedge\sigma_1
- \alpha_3\wedge\alpha_1\wedge\sigma_2
- \alpha_1\wedge\alpha_2\wedge\sigma_3.
\end{split}
\end{equation}
Here differential forms on $M$ and $\Sigma$ are identified
with their pullbacks to $Y=M\times\Sigma$. The Riemannian 
metric on $M\times\Sigma$ determined by $\phi$ is the product
metric and the cross product is given by
$v_1\times v_2=v_3$ and $\xi\times v_i=j_i\xi$
for $\xi\in T\Sigma$ and $i=1,2,3$. 

\bigbreak

Fix a principal $\G$-bundle $Q\to\Sigma$ and 
consider the product $\G$-bundle ${M\times Q\to M\times\Sigma}$.
Denote by $\cA(\Sigma)$ the space of connections on $Q$
and by $\g_Q\to\Sigma$ the Lie algebra bundle.
Write a connection on $\R\times M\times\Sigma$ 
in the form 
$
A + \Phi\,ds+\sum_i\Psi_i\alpha_i
$
with $A:\R\times M\to\cA(\Sigma)$ and 
$\Phi,\Psi_i:\R\times M\to\Om^0(\Sigma,\g_Q)$.
Then equation~\eqref{eq:DT2} has the following 
form (with $\eps=1$)
\begin{equation}\label{eq:DT3}
\begin{split}
\p_sA-d_A\Phi 
+ \sum_i\left(\p_{v_i}A-d_A\Psi_i\right)\circ j_i &= 0, \\
F_{01}+F_{23}-\eps^{-2}\inner{F_A}{\sigma_1} &= 0, \\
F_{02}+F_{31}-\eps^{-2}\inner{F_A}{\sigma_2} &= 0, \\
F_{03}+F_{12}-\eps^{-2}\inner{F_A}{\sigma_3} &= 0.
\end{split}
\end{equation}
Here $F_{0i}=F_{\A+\Phi\,ds}(\p/\p s,v_i)$ and
$F_{jk}:=F_{\A+\Phi\,ds}(v_j,v_k)$. Thus
\begin{equation}\label{eq:DT4}
\begin{split}
F_{0i} &:= \p_s\Psi-\p_{v_i}\Phi + [\Phi,\Psi_i], \\
F_{jk} &:= \p_{v_j}\Psi_k - \p_{v_k}\Psi_j + [\Psi_j,\Psi_k] 
+ \sum_i\alpha_i([v_j,v_k])\Psi_i.
\end{split}
\end{equation}
For general $\eps$ equation~\eqref{eq:DT3} is obtained
by replacing $\sigma_i$ with $\eps^2\sigma_i$
and $\dvol_\Sigma$ with $\eps^4\dvol_\Sigma$.
Taking the limit $\eps\to0$ in~\eqref{eq:DT3}
one obtains the equation
\begin{equation}\label{eq:DT5}
\p_sA-d_A\Phi 
- \sum_i\left(\p_{v_i}A-d_A\Psi_i\right)\circ j_i 
= 0,\qquad 
F_A^+=0.
\end{equation}
This is the unperturbed Fueter equation~\eqref{eq:floerH} 
on ${\R\times M}$ with values in the moduli space
$X=\cM^{\mathrm{asd}}(\Sigma)$ of anti-self-dual
instantons on $\Sigma$ with its standard hyperk\"ahler
structure (see Remark~\ref{rmk:Gfueter5} below).
These observations suggests a correspondence 
between the Donaldson--Thomas--Floer homology groups 
$\HFDT_*(M\times\Sigma)$ and the 
hyperk\"ahler Floer homology groups 
$\HFHK_*(M,\cM^{\mathrm{asd}}(\Sigma))$,
in analogy to the Atiyah--Floer conjecture~\cite{DS}.

\begin{remark}\label{rmk:DT1}\rm
{\bf (i)}  The $4$-form $\psi$ in~\eqref{eq:phipsi}
is closed when $\lambda=\dvol_M(v_1,v_2,v_3)$ is constant,
however, the $3$-form $\phi$ will in general not be closed.

\smallskip\noindent{\bf (ii)}
Consider the $7$-manifold 
$
Y:=M\times\Sigma,
$
where $\Sigma$ is a $K3$-surface and $M$ is a 
closed oriented $3$-manifold with ${b_1(M)\le 2}$. 
Let $\pi:Y\to\Sigma$ be the projection. 
The following argument by Donaldson shows that 
$Y$ does not admit a $\G_2$-structure. 
On a $\G_2$-manifold there is a splitting of the space 
$H^2(Y)$ of harmonic $2$-forms into the eigenspaces 
\begin{equation*}
\begin{split}
H^{2,+}(Y)
&:=\left\{\tau\in H^2(Y)\,|\,*(\phi\wedge\tau)=2\tau\right\}
= \left\{*(\psi\wedge\alpha)\,|\,\alpha\in H^1(Y)\right\}, \\
H^{2,-}(Y)
&:=\left\{\tau\in H^2(Y)\,|\,*(\phi\wedge\tau)=-\tau\right\}
= \left\{\tau\in H^2(Y)\,|\,\psi\wedge\tau=0\right\}.
\end{split}
\end{equation*}
The subspace $H^{2,+}(Y)$ is isomorphic to $H^1(Y)$ 
and, by definition, the qua\-dra\-tic form 
$H^{2,-}(Y)\to\R:\tau\mapsto \int_Y\phi\wedge\tau\wedge\tau$
is negative definite.  In the case at hand, $H^{2,-}(Y)$ 
has codimension $b_1(Y)=b_1(M)\le2$, and hence 
intersects $\pi^*H^{2,\pm}(\Sigma)$ nontrivially.  
Choose $0\ne\tau^\pm\in H^{2,\pm}(\Sigma)$ such that 
$\pi^*\tau^\pm\in H^{2,-}(Y)$ and 
$\tau^\pm\wedge\tau^\pm=\pm\dvol_\Sigma$.
Then $\int_Y\phi\wedge\pi^*(\tau^-\wedge\tau^-)$
and $\int_Y\phi\wedge\pi^*(\tau^+\wedge\tau^+)$
have opposite signs, a contradiction. 

\smallskip\noindent{\bf (iii)}
The argument in~(ii) breaks down for $M=\T^3$ 
and in this case the $3$-form $\phi$ in~\eqref{eq:phipsi} 
is indeed a $\G_2$-structure (for a suitable frame on $\T^3$).

\smallskip\noindent{\bf (iv)}
Existence results for $\G_2$-structures were established by Joyce~\cite{J}.
A question posed by Donaldson is, which $7$-manifolds 
admit nondegenerate $3$-forms $\phi$ that are closed
or co-closed, but not necessarily both. This is analogous 
to the question, which manifolds admit symplectic or 
complex structures, but not necessarily K\"ahler structures.
\end{remark}

\begin{remark}\label{rmk:DT2}\rm
The above setting extends naturally to general 
$\G$-bundles over $M\times\Sigma$.  
A homotopy class of maps from $M$ to the moduli space 
$\cM^{\mathrm{asd}}(\Sigma)$ determines the relevant 
principal $\G$-bundle over $M\times\Sigma$.
\end{remark}

\begin{remark}\label{rmk:DT3}\rm
The discussion of the present section is closely related 
to several observations by Donaldson--Thomas in~\cite{DT} and 
by Donaldson--Segal in~\cite{DSe}. In~\cite[Section~5]{DT}
Donaldson and Thomas discuss $\Spin(7)$-instantons on a product
$S\times\Sigma$ of two hyperk\"ahler $4$-manifolds $S$ and $\Sigma$.
They note that, shrinking the metric on $\Sigma$, leads in the 
adiabatic limit to solutions of the Fueter equation 
for maps $u:S\to\cM^{\mathrm{asd}}(\Sigma)$.
Taking $S=\R\times M$ one arrives at equation~\eqref{eq:DT3}.

In~\cite[Section~6]{DSe} Donaldson and Segal extend this 
discussion to a setting where $M\times\Sigma$ is replaced
by a $\G_2$-manifold $Y$, and $M$ is replaced by an 
associative submanifold of $Y$.  This leads to an interaction 
between $\G_2$-instantons on $Y$ and Fueter sections 
of the bundle of framed anti-self-dual instantons 
on the fibers of the normal bundle of $M$. Generically, 
such Fueter sections are expected to exist at 
isolated parameters in a $1$-parameter family 
of $\G_2$ structures. The existence of nonconstant solutions
of the Fueter equation for singular divergence-free 
frames, as discussed in Section~\ref{sec:FUETER}, 
seems to be a linear analogue of this codimension 
one phenomenon.

In this extended setting, relating the Donaldson--Thomas 
equations over a $G_2$-manifold $Y$ to the Fueter 
equations over an associative submanifold~$M$, 
important progress has recently been made by 
Walpuski~\cite{Wa1,Wa2}.  He carried out the 
adiabatic limit analysis and proved that Fueter 
sections over $M$, under suitable transversality 
assumptions, give rise to $\G_2$-instantons over $Y$
whose energy is concentrated near $M$.
\end{remark}

 
\section{The gauged Fueter equation}\label{sec:GAUGE} 

Equation~\eqref{eq:DT3} extends naturally to a setting 
where the space of connections over $\Sigma$ is replaced 
by a hyperk\"ahler manifold $(X,\om_1,\om_2,\om_3,J_1,J_2,J_3)$
and the group of gauge transformations over $\Sigma$ 
by a compact Lie group~$\G$, acting on~$X$ by hyperk\"ahler isometries.  
Denote the action by $(g,x)\mapsto gx$ 
and the infinitesimal action of the Lie algebra
$\g:=\Lie(\G)$ by $L_x:\g\to T_xX$.
Thus $L_x\xi:=\left.\frac{d}{dt}\right|_{t=0}\exp(t\xi)x$
for $x\in X$ and $\xi\in\g$. Choose an invariant inner product on $\g$
and suppose that the action is generated by equivariant moment maps 
$\mu_1,\mu_2,\mu_3:X\to\g$, so that 
$$
\om_i(L_x\xi,\hat x) = \inner{d\mu_i(x)\hat x}{\xi},\qquad
\om_i(L_x\xi,L_x\eta) = \inner{\mu_i(x)}{[\xi,\eta]}, 
$$
for all $\hat x\in T_xX$, $\xi,\eta\in\g$, and $i=1,2,3$. 
Fix an oriented $3$-manifold $M$ with a volume form 
$\dvol_M$, a normal divergence-free frame $v_1,v_2,v_3$,
and denote by $\alpha_1,\alpha_2,\alpha_3\in\Om^1(M)$ 
the dual frame.  Choose a principal $\G$-bundle 
${\pi:P\to M}$, let $\cA\subset\Om^1(P,\g)$ be 
the space of connections on $P$, and let $\cF$
be the space of $\G$-equivariant maps $f:P\to X$. 
There is a natural $1$-form on $\cA\times\cF$, 
which assigns to every pair $(A,f)\in\cA\times\cF$ 
the linear map 
$
\Psi_{A,f}:T_A\cA\times T_f\cF\to\R,
$
given by
\begin{equation}\label{eq:Gaction}
\begin{split}
\Psi_{A,f}\left(\hat A,\hat f\right) 
&:= 
\int_M\bigl\langle F_A\wedge \hat A\bigr\rangle \\
&\quad\;\; - \sum_i\int_M\Bigl(
\om_i\bigl(d_Af(v_i),\hat f\bigr)
+ \bigl\langle\mu_i(f),\hat A(v_i)\bigr\rangle
\Bigr)\,\dvol_M \\
&\;=
\sum_i\int_M
\Bigl\langle
F_A(v_j,v_k)-\mu_i(f),\hat A(v_i)
\Bigr\rangle\dvol_M \\
&\quad\;\;
- \int_M
\Bigl\langle
\sum_iJ_id_Af(v_i),\hat f
\Bigr\rangle\,\dvol_M 
\end{split}
\end{equation}
for $\hat A\in T_A\cA=\Om^1(M,\g_P)$ 
and $\hat f\in T_f\cF=\Om^0(M,f^*TX/G)$.
Here the second sum runs over all cyclic 
permutations $i,j,k$ of $1,2,3$. 
The $1$-form $d_Af:TP\to f^*TX$ 
is the covariant derivative of $f$ 
with respect to $A$, defined by 
$(d_Af)_p(\hat p) := df(p)\hat p+L_{f(p)}A_p(\hat p)$
for $\hat p\in T_pP$. It is $\G$-equivariant
and horizontal (i.e.\ $(d_Af)_p(p\xi)=0$ for $\xi\in\g$).  
Hence it descends to a $1$-form on~$M$ with values 
in the quotient bundle ${f^*TX/\G\to M}$.
To understand the term $d_Af(v_i)\in\Om^0(M,f^*TX/\G)$,
choose $\G$-equivariant lifts ${\tilde v_i\in\Vect(P)}$ 
of~$v_i$ and observe that the section $d_Af(\tilde v_i)$ 
of the vector bundle ${f^*TX\to P}$ is $\G$-equivariant 
and independent of the choice of the lifts.
The resulting section of the bundle $f^*TX/\G\to P/\G=M$ 
is denoted $d_Af(v_i)$. 

The group $\cG=\cG(P)$ of gauge transformations 
acts contravariantly on $\cA\times\cF$ by 
$g^*A := g^{-1}dg+g^{-1}Ag$ and $g^*f:=g^{-1}f$.
The covariant infinitesimal action of the Lie 
algebra $\Om^0(M,\g_P)=\Lie(\cG)$ is given by 
$\Phi\mapsto(-d_A\Phi,L_f\Phi)\in T_A\cA\times T_f\cF$.
The $1$-form~\eqref{eq:Gaction} is $\cG$-invariant 
and horizontal, in the sense that 
$\Psi_{A,f}(-d_A\Phi,L_f\Phi)=0$
for all $A$, $f$, and $\Phi$. 
Hence $\Psi$ descends to a $1$-form on 
the quotient space $\cB:=(\cA\times\cF)/\cG$. 

\begin{remark}\label{rmk:Gfueter1}\rm
The $1$-form~\eqref{eq:Gaction} is closed.
To see this, choose a smooth path 
${I\to\cA\times\cF:s\mapsto(A_s,u_s)}$.
Think of $\A:=\{A_s\}_{s\in I}$ as a connection 
on the principal bundle $I\times P$ over $I\times M$,
and of $u$ as a $\G$-equivariant map from
$I\times P$ to $X$. The integral of $\Psi$ 
over this path is given by
\begin{equation*}
\begin{split}
\int_I(\A,u)^*\Psi
&=
\int_I\Psi_{A_s,f_s}\left(\p_sA,\p_su\right)\,ds \\
&= \int_{I\times M} \Bigl(
\frac12\winner{F_\A}{F_\A} 
- \sum_i\bigl(u^*\om_i-d\inner{\mu_i(u)}{\A}\bigr)
\wedge \iota(v_i)\dvol_M\Bigr).
\end{split}
\end{equation*}
The last integral is meaningful, because the $2$-form
$u^*\om_i-d\inner{\mu_i(u)}{\A}$ on ${I\times P}$ 
descends to $I\times M$. Since the integrand is closed, 
the integral is invariant under homotopy with fixed endpoints.  
If $\iota(v_i)\dvol_M=d\beta_i$ and $\CS:\cA\to\R$ 
denotes the Chern--Simons functional, 
then~\eqref{eq:Gaction} is the differential 
of the action functional
\begin{equation}\label{eq:GACTION}
\sA(A,f) 
:= \CS(A) + \sum_i\int_M\beta_i\wedge
\left(f^*\om_i-d\inner{\mu_i(f)}{A}\right).
\end{equation}
\end{remark}

\begin{remark}\label{rmk:Gfueter2}\rm
Given $A\in\cA$, define the twisted Fueter operator by
$$
\dd_{A,v}f := J_1d_Af(v_1)+J_2d_Af(v_2)+J_3d_Af(v_3)
$$
for $f\in\cF$.  Thus $\dd_{A,v}f$ is a section of the 
quotient bundle $f^*TX/\G\to M$. 
Then the zeros of the $1$-form~\eqref{eq:Gaction}
are the solutions $(A,f)\in\cA\times\cF$
of the {\bf three-dimensional 
gauged Fueter equation}
\begin{equation}\label{eq:Gfueter3}
\dd_{A,v}f =0,\qquad
*F_A = \sum_i(\mu_i\circ f)\pi^*\alpha_i.
\end{equation}
Here $*$ denotes the Hodge $*$-operator on $M$ 
associated to the metric~\eqref{eq:metric}.
Thus 
$$
*F_A=\sum_iF_A(v_j,v_k)\pi^*\alpha_i, 
$$
where the sum runs over all cyclic permutations 
$i,j,k$ of $1,2,3$. 

The gradient flow lines are pairs, 
consisting of a connection $\A=A+\Phi\,ds$ on $\R\times P$
and a $\G$-equivariant map $u:\R\times P\to X$, that satisfy
the {\bf four-dimensional gauged Fueter equation}
\begin{equation}\label{eq:Gfueter4}
\p_su+L_u\Phi-\dd_{A,v}u = 0,\qquad
\p_sA-d_A\Phi + *F_A = \sum_i(\mu_i\circ u)\pi^*\alpha_i.
\end{equation}
This is reminiscent of the symplectic vortex 
equations~\cite{CGS,CGMS,F,GS,M2,O,OZ}.  
Similar equations were studied by Taubes~\cite{T},
Pidstrigatch~\cite{P}, and Haydys~\cite{H1}.
The usual Fueter equation corresponds to the case $\G=\{1\}$,
the instanton Floer equation~\cite{F4} to the case $X=\{\mathrm{pt}\}$,
the Donaldson--Thomas equation to the case 
$X=\cA(\Sigma)$ and $\G=\cG(\Sigma)$ 
(see Remark~\ref{rmk:Gfueter5}), and
the Seiberg--Witten equation to the case 
$X=\H$ and $\G=S^1$ (see Section~\ref{sec:SW}).
\end{remark}

\begin{remark}\label{rmk:Gfueter3}\rm
Define the energy of a pair $(A,f)\in\cA\times\cF$ by 
\begin{equation}\label{eq:Genergy}
\cE(A,f) := \frac12\int_M\Bigl(
\Abs{d_Af}^2 +\Abs{F_A}^2+\sum_i\Abs{\mu_i(f)}^2
\Bigr)\,\dvol_M.
\end{equation}
Then there is an {\bf energy identity}
\begin{equation}\label{eq:GENERGY}
\begin{split}
&\frac12\int_M\Bigl(\bigl|\dd_{A,v}f\bigr|^2
+\bigl|*F_A-\sum_i\mu_i(f)\pi^*\alpha_i\bigr|^2
\Bigr)\,\dvol_M \\
&\qquad= \cE(A,f) + \sum_i\int_M\alpha_i\wedge
\Bigl(f^*\om_i-d\inner{\mu_i(f)}{A}\Bigr).
\end{split}
\end{equation}
This is the gauged analogue of equation~\eqref{eq:ENERGY}.
\end{remark}

\begin{remark}\label{rmk:Gfueter4}\rm
One can introduce an $\eps$-parameter 
in~\eqref{eq:Gfueter4} as follows
\begin{equation}\label{eq:Gfueter4eps}
\p_su+L_u\Phi-\dd_{A,v}u = 0,\qquad
\p_sA-d_A\Phi + *F_A 
= \eps^{-2}\sum_i(\mu_i\circ u)\pi^*\alpha_i.
\end{equation}
In the limit $\eps\to0$ one obtains, formally, 
the equation
\begin{equation}\label{eq:Gfueter0}
\p_su+L_u\Phi-\dd_{A,v}u = 0,\quad
\mu_1\circ u = \mu_2\circ u = \mu_3\circ u=0.
\end{equation}
This corresponds to the four-dimensional 
Fueter equation on $\R\times M$ with values
in the hyperk\"ahler quotient 
$$
X\dslash\G :=
(\mu_1^{-1}(0)\cap\mu_2^{-1}(0)\cap\mu_3^{-1}(0))/\G.
$$
\end{remark}

\begin{remark}\label{rmk:Gfueter5}\rm
The space $\cX:=\cA(\Sigma)$ 
of connections on a principal $\G$-bundle $Q$ 
over a closed hyperk\"ahler $4$-manifold 
$(\Sigma,\sigma_1,\sigma_2,\sigma_3,j_1,j_2,j_3)$
is itself a hyperk\"ahler manifold.
The symplectic forms $\om_i\in\Om^2(\cX)$ and the 
complex structures $J_i:T\cX\to T\cX$ are given by
$$
\om_i(\alpha,\beta):=
\int_\Sigma\winner{\alpha}{\beta}\wedge\sigma_i,\qquad
J_i\alpha 
:= *_\Sigma(\alpha\wedge\sigma_i) 
= -\alpha\circ j_i
$$
for $\alpha,\beta\in\Om^1(\Sigma,\g_Q)=T_A\cA(\Sigma)$
and $i=1,2,3$. The group $\cG=\cG(\Sigma)$ of gauge 
transformations of $Q$ acts on $\cX=\cA(\Sigma)$ by 
hyperk\"ahler isometries and the moment maps 
$\mu_i:\cA(\Sigma)\to\Om^0(\Sigma,\g_Q)=\Lie(\cG(\Sigma))$
are 
$$
\mu_i(A) := *_\Sigma(F_A\wedge\sigma_i) = \inner{F_A}{\sigma_i},\qquad
A\in\cA(\Sigma).
$$ 
If $\cP=M\times\cG(\Sigma)$ is the trivial bundle 
with the infinite-dimensional structure group $\cG(\Sigma)$
then~\eqref{eq:Gfueter4eps} is equivalent to the 
Donaldson--Thomas equation~\eqref{eq:DT3} 
on $M\times\Sigma$. The function 
$A:\R\times M\to\cA(\Sigma)$ in~\eqref{eq:DT3} 
corresponds to $u$ in~\eqref{eq:Gfueter4eps},
while the functions $\Psi_i:\R\times M\to\Om^0(\Sigma,\g_Q)$ 
in~\eqref{eq:DT3} determine the path of connections 
$A(s):=\sum_i\Psi_i(s)\alpha_i\in\Om^1(M,\Om^0(\Sigma,\g_Q))$ 
in~\eqref{eq:Gfueter4eps}.  
The hyperk\"ahler quotient is the moduli space 
$\cA(\Sigma)\dslash\cG(\Sigma)=\cM^{\mathrm{asd}}(\Sigma)$
of anti-self-dual instantons on $\Sigma$.
These observations extend to nontrivial 
$\cG(\Sigma)$-bundles $\cP\to M$, arising from 
general $\G$-bundles over $M\times\Sigma$.
\end{remark}

 
\section{Relation to Seiberg--Witten theory}\label{sec:SW} 

Let $M$ be a closed connected oriented Riemannian $3$-manifold.
A spin$^c$ structure on $M$ is a rank two complex hermitian 
vector bundle $W\to M$ equipped with a Clifford action 
$\gamma:TM\to\End(W)$. This action assigns to every 
tangent vector $v\in T_yM$ a traceless endomorphism 
$\gamma(v)$ of the fiber $W_y$ satisfying
$$
\gamma(v)+\gamma(v)^* = 0,\qquad \gamma(v)^*\gamma(v) = \abs{v}^2\one,
$$
and it satisfies $\gamma(v_3)\gamma(v_2)\gamma(v_1)=\one$
for every positive orthonormal frame. 
A spin$^c$ connection is a hermitian connection on $W$
that satisfies the Leibniz rule for Clifford multiplication 
(with the Levi-Civita connection on the tangent bundle).
Associated to a spin$^c$ connection $\Nabla{A}$ 
is a Dirac operator 
$
\dD_A:\Om^0(M,W)\to\Om^0(M,W)
$
defined by 
$$
\dD_Af := \sum_i\gamma(v_i)\Nabla{A,v_i}f
$$
for $f\in\Om^0(M,W)$. Here $v_1,v_2,v_3$ is any global
orthonormal frame of $TM$.  The Dirac operator is 
self-adjoint and independent of the choice of the frame.
The difference of two spin$^c$ connections is an 
imaginary valued $1$-form.  Let $\cA(\gamma)$ denote the 
space of spin$^c$ connections on $W$. The perturbed 
{\bf Chern--Simons--Dirac functional} 
$\CSD_\eta:\cA(\gamma)\times\Om^0(M,W)$ has the form
$$
\CSD_\eta(A,f) 
:= 
\int_Y\winner{(A-A_0)}{(\eta+\tfrac12(F_{A_0}+F_A))}
- \frac12\int_Y\inner{f}{\dD_Af}\,\dvol_M.
$$
Here $A_0\in\cA(\gamma)$ is a reference connection,
$\eta\in\Om^2(M,\i\R)$ is a closed $2$-form,
and $F_A:=\tfrac12\trace(F^{\Nabla{A}})\in\Om^2(M,\i\R)$.
A negative gradient flow line of $\CSD_\eta$ is a triple
$(A,\Phi,u)$, consisting of a smooth path
${\R\to\cA(\gamma):s\mapsto A(s)}$
of spin$^c$ connections, a smooth path
${\R\to\Om^0(M,\i\R):s\mapsto\Phi(s)}$  
of functions on~$M$, and a smooth path
${\R\to\Om^0(M,W):s\mapsto u_s=u(s,\cdot)}$ 
of sections of~$W$, that satisfy the $4$-dimensional 
{\bf Seiberg--Witten--Floer equation}
\begin{equation}\label{eq:SWF}
\p_su+\Phi u - \dD_Au=0,\qquad
\p_sA-d\Phi+*(F_A+\eta)=\gamma^{-1}((uu^*)_0).
\end{equation}
Here $(uu^*)_0:W\to W$ denotes the traceless hermitian 
endomorphism given by $(uu^*)_0w:=\inner{u}{w}u-\tfrac12\abs{u}^2w$,
$I$ denotes the complex structure on $W$, $\inner{\cdot}{\cdot}$
denotes the real inner product on $W$, and 
$\gamma^{-1}((uu^*)_0) := \tfrac{\i}{2}\inner{I\gamma(\cdot)u}{u}$.
(See the book by Kronheimer--Mrowka~\cite{KM} 
for a detailed account of Seiberg--Witten--Floer theory
and its applications to low-dimensional topology.)

Now let $v_1,v_2,v_3\in\Vect(M)$ be an orthonormal divergence-free 
frame and let $\alpha_1,\alpha_2,\alpha_3\in\Om^1(M)$ be the dual frame.  
For an orthonormal frame the divergence-free condition 
can be expressed in the form 
\begin{equation}\label{eq:ONBdiv}
\Nabla{v_1}v_1 + \Nabla{v_2}v_2 + \Nabla{v_3}v_3 = 0.
\end{equation}
The frame induces a spin structure on the trivial bundle $W:=M\times\H$
via
$$
\gamma(v_i) = J_i,\qquad i=1,2,3,
$$
where $J_1,J_2,J_3$ are the complex structures in~\eqref{eq:IJ}.
(This is a spin structure because it commutes with all three 
complex structures $I_1,I_2,I_3$ in~\eqref{eq:IJ}.)  
The spin connection $A_0\in\cA(\gamma)$ of this structure 
is given by 
$$
\Nabla{A_0,v_i}f = \p_{v_i}f + A_0(v_i)f
$$
for $f\in\Om^0(M,\H)$ and $i=1,2,3$, where
$$
A_0(v_i) := \frac12\inner{\Nabla{v_i}v_j}{v_k}J_i 
+ \frac12\inner{\Nabla{v_i}v_i}{v_j}J_k
- \frac12\inner{\Nabla{v_i}v_i}{v_k}J_j
$$
for every cyclic permutation $i,j,k$ of $1,2,3$. 
The curvature of $A_0$ is traceless.
A simple calculation, using~\eqref{eq:ONBdiv} and the
orthonormal condition, shows that 
\begin{equation}\label{eq:dirac0}
\dD_{A_0}f = \dd_vf + \lambda f,\qquad 
\lambda:=\frac14\sum_i\alpha_i([v_j,v_k]),
\end{equation}
where the sum runs over all cyclic permutations $i,j,k$ of $1,2,3$.
Now consider the circle action on $\H$ generated by the vector field
$x\mapsto -x\i$.  This is the standard circle action associated to the 
complex structure $I_1$ in~\eqref{eq:IJ}.  It preserves the
hyperk\"ahler structure determined by the complex structures 
$J_1,J_2,J_3$ and the symplectic forms $\om_1,\om_2,\om_3$
in~\eqref{eq:ENERGY}.  The moment maps $\mu_1,\mu_2,\mu_3:\H\to\i\R$ 
of this action are given by $\mu_i(x)=\frac{\i}{2}\om_i(-x\i,x)$.  
Hence
\begin{equation}\label{eq:gammu}
\gamma^{-1}((uu^*)_0) = \frac{\i}{2}\inner{-\gamma(\cdot)u\i}{u}
= \sum_i\frac{\i}{2}\om_i(-u\i,u)\alpha_i
= \sum_i(\mu_i\circ u)\alpha_i.
\end{equation}
By~\eqref{eq:dirac0} and~\eqref{eq:gammu}, 
the Seiberg--Witten--Floer equation~\eqref{eq:SWF} 
has the form
\begin{equation}\label{eq:SWFdiv}
\p_su-u\Phi - \dd_{A,v}u=\lambda u,\quad
\p_sA-d\Phi+*\left(dA+\eta\right)
=\sum_i(\mu_i\circ u)\alpha_i.
\end{equation}
Here $\R\to\Om^1(M,\i\R):s\mapsto A(s)$ is a path of imaginary valued
$1$-forms and the associated path of spin$^c$ connections is 
$s\mapsto A_0+A(s)$.  With $\eta=0$ and $\lambda=0$ 
this is the gauged Fueter equation~\eqref{eq:Gfueter4} 
for~${X=\H}$ and~${\G=S^1}$.  This correspondence extends to general 
spin$^c$ structures via the appropriate circle bundles over $M$. 
Replacing the circle by the group $\G=\Sp(1)$, acting on $\H$ on the right,
one obtains the equations of Pidstrigatch--Tyurin~\cite{PT}.


\appendix

\section{Divergence-free frames} \label{app:DFF}  

Let $M$ be a closed connected oriented $3$-manifold and $\dvol_M\in\Om^3(M)$ 
be a positive volume form.  Denote the set of positive frames by 
$$
\sF := \left\{v=(v_1,v_2,v_3)\in\Vect(M)^3\,|\,
\dvol_M(v_1,v_2,v_3)>0\right\}
$$
and the set of divergence-free positive frames by 
$$
\sV := \left\{v\in\sF\,|\,
d\iota(v_i)\dvol_M=0\mbox{ for }i=1,2,3\right\}.
$$
Given three deRham cohomology classes $a_1,a_2,a_3\in H^2(M;\R)$, 
denote the set of divergence-free positive frames that 
represent the classes $a_i$ by
$$
\sV_a := \left\{v\in\sV\,|\,
[\iota(v_i)\dvol_M]=a_i\mbox{ for }i=1,2,3\right\}.
$$
The following theorem is an application of Gromov's h-principle 
and is proved in~\cite[page~182]{G} and~\cite[Corollary~20.4.3]{EM}.
Although the result is stated in these references in the weaker form
that every frame can be deformed in $\sF$ to a divergence-free frame,
the proofs give the stronger result stated below 
(as explained to the author by Yasha Eliashberg).

\begin{theorem}[{\bf Gromov}]\label{thm:gromov}
The inclusion $\sV_a\hookrightarrow\sF$ is a homotopy equivalence
for all $a_1,a_2,a_3\in H^2(M;\R)$,
and so is the inclusion $\sV\hookrightarrow\sF$.
\end{theorem}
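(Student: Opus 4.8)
The plan is to prove Theorem~\ref{thm:gromov} as an application of Gromov's h-principle for open, $\Diff(M)$-invariant differential relations (the convex integration / holonomic approximation machinery of \cite{G,EM}), reducing the stated homotopy equivalences to a purely formal computation. First I would set up the relevant jet formalism. A positive frame is a section of the open subbundle $\Fr^+ \subset (TM)^{\oplus 3}$ consisting of positively oriented triples, so $\sF$ is the space of sections of an open fiber bundle; since $M$ is a closed oriented parallelizable $3$-manifold, the fiber is $\GL^+(3,\R) \simeq \SO(3)$ and $\sF$ is weakly homotopy equivalent to $\Map(M,\SO(3))$. The divergence-free condition $d\iota(v_i)\dvol_M = 0$, together with the cohomological normalization $[\iota(v_i)\dvol_M] = a_i$, is a first-order \emph{closed} condition on each $v_i$ separately; the key point is to exhibit it as an open, ample, diffeomorphism-invariant differential relation $\sR_a \subset J^1(M,\Fr^+)$ after passing to the appropriate reformulation, so that the parametric h-principle applies: the inclusion of genuine (holonomic) solutions into formal solutions is a weak homotopy equivalence.

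The second step is the identification of the space of \emph{formal} solutions. Writing $\mu_i := \iota(v_i)\dvol_M \in \Om^2(M)$, the assignment $v \mapsto (\mu_1,\mu_2,\mu_3)$ is a fiberwise linear isomorphism from $(TM)^{\oplus 3}$ onto $(\Lambda^2 T^*M)^{\oplus 3}$ (contraction with the nowhere-zero $3$-form $\dvol_M$), carrying $\Fr^+$ onto triples of $2$-forms whose "wedge-triple" $\mu_1\wedge\mu_2\wedge\mu_3 / \dvol_M^{\otimes ?}$ is positive in the suitable sense — concretely $\sF \cong \{(\mu_i) : \mu_i(v_j\text{-dual})\ldots\}$, an open condition. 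Under this dictionary $\sV_a$ becomes the space of triples of \emph{closed} $2$-forms in the fixed cohomology classes $a_i$ satisfying the pointwise nondegeneracy condition. The formal counterpart of "closed in class $a_i$" is, by the standard trick, the space of pairs (a $1$-form-valued section $\eta_i$ with $d\eta_i$ playing the role of a correction, or equivalently an exact perturbation of a fixed closed representative $\mu_i^0 \in a_i$); since $d$ is surjective onto exact forms with contractible fibers (affine over closed forms, or: $\Om^1/\ker d$), the space of formal solutions of the closedness relation deformation-retracts onto the space of \emph{all} (not necessarily closed) positive triples, i.e.\ onto $\sF$. Hence the space of formal solutions of $\sR_a$ is weakly homotopy equivalent to $\sF$, and the h-principle gives $\sV_a \hookrightarrow \sF$ a weak homotopy equivalence; since all spaces in sight are open subsets of Fréchet manifolds of sections (hence have the homotopy type of CW complexes), weak homotopy equivalence upgrades to genuine homotopy equivalence. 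The statement for $\sV \hookrightarrow \sF$ then follows because $\sV = \bigsqcup_a \sV_a$ is a disjoint union over the discrete set $H^2(M;\R)^3$ of the $\sV_a$ (the cohomology class of a closed form is locally constant in $C^\infty$, and any two classes are realized, so each nonempty $\sV_a$ maps by a homotopy equivalence to $\sF$, and summing gives the result — more precisely one checks that $\sF$ itself is connected-component-compatible so no multiplicity issue arises).

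The main obstacle — and the reason the theorem is credited to Gromov and phrased with the caveat about the references — is verifying that the differential relation is genuinely \emph{ample} (or that one is in the "directed" / "open Diff-invariant on an open manifold after puncturing" situation) so that convex integration produces solutions in a \emph{prescribed} cohomology class, not merely solutions of the weaker un-normalized problem stated verbatim in \cite{G,EM}. This is precisely the point the excerpt attributes to Eliashberg: the convex integration scheme for the equation $d\mu_i = 0$ modifies $\mu_i$ by exact forms $d\beta_i$ (it inserts high-frequency oscillations that are themselves exact), so it automatically preserves $[\mu_i]$, and ampleness holds because the nondegeneracy constraint cuts out an open set whose convex hull (in the relevant $1$-jet fiber direction, after fixing the value of $\mu_i$) is everything. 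I would therefore spend the bulk of the proof making this ampleness/closedness-preservation explicit: fix closed reference representatives $\mu_i^0 \in a_i$, write $\mu_i = \mu_i^0 + d\beta_i$, recast the relation as an open relation on the $1$-jet of $\beta = (\beta_1,\beta_2,\beta_3) \in \Om^1(M)^{\oplus 3}$, check that its principal symbol is of the ample type handled by \cite[Part~3]{EM}, and invoke the parametric h-principle there. Everything else — the bundle-theoretic identifications, the passage from weak to strong homotopy equivalence, the decomposition over $H^2$ — is routine.
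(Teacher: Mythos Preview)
Your approach to $\sV_a\hookrightarrow\sF$ is essentially the paper's: fix a closed representative $\sigma_i\in a_i$, write $\mu_i=\sigma_i+d\beta_i$, and recast the problem as an open first-order relation on $\beta=(\beta_1,\beta_2,\beta_3)\in\Om^1(M)^3$; the paper then proves ampleness by a short explicit computation (Lemma~\ref{le:ample}) and invokes the convex-integration h-principle of~\cite{EM}. Your deferral of the ampleness check is reasonable, though note that the $\Diff(M)$-invariance route you mention first does not apply directly since $M$ is closed --- ampleness via convex integration is the operative mechanism, as you correctly identify later.

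Your argument for $\sV\hookrightarrow\sF$, however, contains a genuine error. The set $H^2(M;\R)^3$ is a real vector space, not a discrete set, and the cohomology class of a closed $2$-form is \emph{not} locally constant in the $\Cinf$ topology --- it varies linearly (e.g.\ $[\sigma+t\tau]=[\sigma]+t[\tau]$ for closed $\tau$). So $\sV$ is not a disjoint union of the $\sV_a$; rather, the map $\sV\to H^2(M;\R)^3$ sending $v$ to the triple of classes $[\iota(v_i)\dvol_M]$ is a continuous surjection onto a contractible base with fibers $\sV_a$. Even granting your picture, a disjoint union of copies of something homotopy equivalent to $\sF$ would not itself be homotopy equivalent to $\sF$.

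The paper handles $\sV$ differently: it runs the h-principle \emph{parametrically with varying cohomology class}. For surjectivity of $\pi_k(\sV)\to\pi_k(\sF)$ it takes $\Lambda=S^k$ together with an arbitrary smooth map $a:\Lambda\to H^2(M;\R)^3$ and applies the h-principle over $\Lambda\times M$. For injectivity it uses the relative h-principle over $\Lambda=B^{k+1}$: given a map $S^k\to\sV$ that bounds in $\sF$, one first extends the induced map $S^k\to H^2(M;\R)^3$ over $B^{k+1}$ (trivial, since the target is contractible) and then lifts this extension to $\sV$ rel boundary. The key point is that $a$ is allowed to move with the parameter, which is exactly what your disjoint-union description forbids.
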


\begin{lemma}[{\bf Gromov}]\label{le:ample}
Let $V$ be a $3$-dimensional real vector space and
$S:V\times V\to V$ be a skew-symmetric bilinear map.
Let $\cR\subset\Hom(V,\End(V))$ be the set of 
all linear maps $L:V\to\End(V)$ such that 
the map
$$
\Lambda^2V\to V:u\wedge v\mapsto
S(u,v)+L(u)v-L(v)u
$$
is a vector space isomorphism.
Fix a $2$-dimensional linear
subspace $E\subset V$ and a linear map
$\lambda:E\to\End(V)$.  Define 
$$
\cL := \left\{L\in\Hom(V,\End(V))\,|\,L|_E=\lambda\right\}.
$$
If $\cL\cap\cR$ is nonempty, then it has two connected 
components and the convex hull of each connected 
component of $\cL\cap\cR$ is equal to $\cL$.
\end{lemma}


\begin{proof}
The proof is a special case of the argument given by 
Eliashberg--Mishachev in~\cite[pages 183/184]{EM}.
Assume without loss of generality that
$$
V=\R^3,\qquad E=\left\{x\in\R^3\,|\,x_1=0\right\}.
$$
Write $S$ in the form
$$
S(u,v) =: \sum_{i<j}u_iv_jS_{ij},\qquad
S_{ij}=-S_{ji}\in\R^3,
$$
and write a linear map $L:\R^3\to\End(\R^3)$ in the form 
$$
L(u)v = \sum_{i,j=1}^3u_iv_jL_{ij},\qquad L_{ij}\in\R^3.
$$
Then $L\in\cR$ if and only if
\begin{equation}\label{eq:det}
\det(S_{23}+L_{23}-L_{32},S_{31}+L_{31}-L_{13},S_{12}+L_{12}-L_{21})\ne 0.
\end{equation}
Denote by $\cR^+$, respectively $\cR^-$, the set of all 
$L\in\cR$ for which the sign of the determinant 
in~\eqref{eq:det} is positive, respectively negative.

Fix a linear map $\lambda:E\to\End(\R^3)$.  This map 
is determined by the coefficients $L_{ij}$ with $i=2,3$.  
Thus an element $L\in\cL\cap\cR$ is determined by 
the choice of $L_{11}$, $L_{12}$, $L_{13}$. 
If $S_{23}+L_{23}-L_{32}=0$ then the determinant 
in~\eqref{eq:det} vanishes for every $L\in\cL$ 
and so $\cL\cap\cR=\emptyset$. 
Hence assume $S_{23}+L_{23}-L_{32}\ne0$. 
Then $\cL\cap\cR^+$ and $\cL\cap\cR^-$ are 
nonempty connected submanifolds of $\R^3\times\R^3\times\R^3$.
(Namely, $L_{11}$ is any vector in $\R^3$, 
$L_{12}$ is required to be in the complement 
of an affine line, and then $L_{13}$ is required 
to be in the complement of an affine plane 
depending smoothly on $L_{12}$.)

Choose $x,y\in\R^3$ such that 
$$
\det(S_{23}+L_{23}-L_{32},x,y) > 0.
$$
Then, for $t>0$ sufficiently large, 
\begin{equation*}
\begin{split}
&\det\left(S_{23}+L_{23}-L_{32},S_{31}+tx,S_{12}+ty\right) > 0,\\
&\det\left(S_{23}+L_{23}-L_{32},S_{31}-tx,S_{12}-ty\right) > 0.
\end{split}
\end{equation*}
Given $L\in\cL$ choose $L',L''\in\cL$ such that 
\begin{equation*}
\begin{split}
&
L_{11}'=L_{11},\qquad 
L_{12}':=L_{21}+ty,\qquad
L_{13}':=L_{31}-tx,\\
&
L_{11}''=L_{11},\qquad 
L_{12}'':=L_{21}-ty,\qquad
L_{13}'':=L_{31}+tx.
\end{split}
\end{equation*}
Then $L',L''\in\cL\cap\cR^+$ and $L=\tfrac12(L'+L'')$.
Hence the convex hull of $\cL\cap\cR^+$ is equal to $\cL$.
A similar argument shows that the convex hull of $\cL\cap\cR^-$ 
is also equal to $\cL$.  This proves Lemma~\ref{le:ample}.
\end{proof}

\begin{proof}[Proof of Theorem~\ref{thm:gromov}]
Fix a Riemannian metric on $M$, let $a\in H^2(M;\R)^3$, 
and let $\sigma_i\in\Om^2(M)$ be the harmonic 
representative of $a_i$, $i=1,2,3$. Define 
$$
\sH_a:=\left\{\beta=(\beta_1,\beta_2,\beta_3)\in\Om^1(M)^3\,|\,
\sigma_i+d\beta_i\mbox{ are linearly independent}\right\}.
$$
Let 
$$
\pi_a:\sH_a\to\sV_a
$$ 
be the projection defined by 
$\pi_a(\beta)=v$ for $\beta\in\sH_a$, 
where $\iota(v_i)\dvol_M:=\sigma_i+d\beta_i$. 
This is a homotopy equivalence. A homotopy inverse 
assigns to $v\in\sV_a$ the unique co-exact triple 
of $1$-forms $\beta\in\pi_a^{-1}(v)$.

Consider the vector bundle 
$$
X:=T^*M\oplus T^*M\oplus T^*M
$$
over $M$ and denote by $X^{(1)}$ the $1$-jet bundle.  
Use the Riemannian metric on $M$ to identify $X^{(1)}$ with 
the set of tuples $(y,\beta_1,\beta_1,\beta_3,L_1,L_2,L_3)$
with $y\in M$, $\beta_i\in T_y^*M$, and $L_i\in\Hom(T_yM,T_y^*M)$. 
Denote by 
$$
\sR_a\subset X^{(1)}
$$
the open subset of all $(y,\beta,L)\in X^{(1)}$
such that the $2$-forms $\tau_i\in\Lambda^2T_y^*M$, 
defined by
\begin{equation}\label{eq:taui}
\tau_i(u,v):=\sigma_i(u,v)+\inner{L_i(u)}{v}-\inner{L_i(v)}{u},\qquad
i=1,2,3,
\end{equation}
are linearly independent.   
Denote by $\sS_a$ the space of sections of $\sR_a$.  
Thus an element of $\sS_a$ is a tuple 
$(\beta,L)=(\beta_1,\beta_2,\beta_3,L_1,L_2,L_3)$
with $\beta_i\in\Om^1(M)$ and $L_i\in\Om^1(M,T^*M)$ 
such that the $2$-forms $\tau_i\in\Om^2(M)$, 
defined by~\eqref{eq:taui} are everywhere linearly independent.  
Then $\sS_a$ is a bundle over $\sF$.  
The projection $\pi_a:\sS_a\to\sF$ is given by $\pi_a(\beta,L)=v$, 
where $\iota(v_i)\dvol_M:=\tau_i$ and $\tau_i$ is as in~\eqref{eq:taui}.  
This map is a homotopy equivalence. 
A homotopy inverse of $\pi_a$ is the inclusion 
$\iota_a:\sF\to\sS_a$ given by $\iota_a(v):=(0,L)$, 
where $L_i(u):=\tfrac12\left(\dvol_M(v_i,u,\cdot)-\sigma_i(u,\cdot)\right)$.
Namely, $\pi_a\circ\iota_a=\id:\sF\to\sF$, both maps are
linear between open subsets of topological vector spaces,
and the kernel of $\pi_a$ is the space of tuples $(\beta,L)$
such that each $L_i$ is symmetric. 

The previous discussion shows that there is a commutative diagram
$$
\xymatrix    
@C=50pt    
@R=25pt    
{    
\sH_a\ar[r]^{\sD_a} \ar[d]_{\pi_a}  & \sS_a\ar[d]^{\pi_a} \\    
\sV_a\ar[r] & \sF 
},
$$
where the vertical maps are homotopy equivalences 
and the differential operator $\sD_a:\sH_a\to\sS_a$ 
is given by $\sD_a\beta:=(\beta,\nabla\beta)$. 
Thus $\sH_a$ is the space of all sections $\beta$ of $X$ 
such that $\sD_a\beta$ satisfies the differential relation $\sR_a$. 
By Lemma~\ref{le:ample}, $\sR_a$ is ample 
in the sense of~\cite[page~167]{EM}. 
Hence $\sR_a$ satisfies the h-principle 
(see~\cite[Theorem~18.4.1]{EM}).  
In particular, every section of $\sR_a$ is homotopic, through
sections of $\sR_a$, to a section of the form $(\beta,\nabla\beta)$.
Equivalently, every frame $v\in\sF$ can be deformed 
within $\sF$ to a divergence-free frame in $\sV_a$.
In fact, by the parametric h-principle, the inclusion 
$\sD_a:\sH_a\to\sS_a$ induces isomorphisms 
on all homotopy groups, and is therefore a homotopy 
equivalence (see~\cite[6.2.A]{EM}).  Hence the
inclusion $\sV_a\hookrightarrow\sF$ is a homotopy equivalence. 

To explain the extension of this result 
to the inclusion of $\sV$ into $\sF$,
it is convenient to spell out the details of the parametric 
h-principle in the present setting. Choose a smooth manifold 
$\Lambda$ and a smooth map $a:\Lambda\to H^2(M;\R)^3$.  
Consider the vector bundle
$$
\tX:=\Lambda\times T^*M\oplus T^*M\oplus T^*M
\to\tM:=\Lambda\times M.  
$$
Define $\tsR\subset\tX^{(1)}$ as the set of tuples
$(\lambda,y,\beta_1,\beta_2,\beta_3,\tL_1,\tL_2,\tL_3)$,
with $\lambda\in\Lambda$, $y\in M$, $\beta_i\in T^*_yM$,
and $\tL_i\in\Hom(T_\lambda\Lambda\times T_yM,T_y^*M)$, 
such that the $2$-forms $\tau_i=\tau_{\lambda,i}\in\Lambda^2T_y^*M$ 
in~\eqref{eq:taui} are linearly independent.
Here $\sigma_i=\sigma_{\lambda,i}$ is the harmonic 
representative of the class $a_i(\lambda)$ and 
$L_i\in\Hom(T_yM,T_y^*M)$ is the restriction of 
$\tL_i$ to $0\times T_yM$. Define the operator 
$\tsD$ from sections of $\tX$ to sections of $\tX^{(1)}$
as the covariant derivative 
$$
\tsD\beta:=(\beta,\nabla\beta).
$$
Let $\tsS$ be the space of sections of $\tsR\subset\tX^{(1)}$
and denote by $\tsH$ its preimage under $\tsD$. 
Thus an element of $\tsH$ is a map
$\Lambda\to\Om^1(M)^3:\lambda\mapsto\beta_\lambda$
such that the $2$-forms 
\begin{equation}\label{eq:tsH}
\tau_{\lambda,i}:=\sigma_{\lambda,i}+d\beta_{\lambda,i},\qquad i=1,2,3,
\end{equation}
are everywhere linearly independent for every $\lambda$. 
An element of $\tsS$ is a smooth 
section that assigns to $\lambda\in\Lambda$ a tuple
\begin{equation}\label{eq:tsS}
(\beta_{\lambda,1},\beta_{\lambda,2},\beta_{\lambda,3},
\tL_{\lambda,1},\tL_{\lambda,2},\tL_{\lambda,3})
\in \Om^1(M)^3\times\Hom(T_\lambda\Lambda,\Om^1(M,T^*M))^3
\end{equation}
such that the $2$-forms $\tau_{\lambda,i}\in\Om^2(M)$, 
defined by~\eqref{eq:taui}, are everywhere linearly independent
for every $\lambda\in\Lambda$.  As before there is a commutative 
diagram 
$$
\xymatrix    
@C=50pt    
@R=25pt    
{    
\tsH\ar[r]^{\tsD} \ar[d]_{\tpi}  & \tsS\ar[d]^{\tpi} \\    
\tsV\ar[r] & \tsF 
}.
$$
Here $\tsV$ is the space of maps 
$\Lambda\to\sV:\lambda\mapsto v_\lambda$
such that $v_\lambda\in\sV_{a(\lambda)}$ 
and $\tsF$ is the space of 
all smooth maps from $\Lambda$ to $\sF$.  
The projection $\tpi:\tsH\to\tsV$,
respectively $\tpi:\tsS\to\tsF$,
assigns to the section $\lambda\mapsto\beta_\lambda$,
respectively~\eqref{eq:tsS}, the section 
$\lambda\mapsto v_\lambda$ with $\iota(v_{\lambda,i})\dvol_M=\tau_{\lambda,i}$, 
where $\tau_{\lambda,i}$ is given by~\eqref{eq:tsH}, 
respectively~\eqref{eq:taui}.  
Both projections are homotopy equivalences.

By Lemma~\ref{le:ample}, the open differential relation $\tsR$ 
is ample.  Hence it follows from the h-principle 
in~\cite[Theorem~18.4.1]{EM} that
every smooth map from $\Lambda$ to $\sF$ 
can be deformed within $\tsF$ to a smooth map 
$\Lambda\to\sV:\lambda\mapsto v_\lambda$ 
that satisfies $v_\lambda\in\sV_{a(\lambda)}$.  
With $\Lambda=S^k$ this implies
that the homomorphism $\pi_k(\sV_a)\to\pi_k(\sF)$ 
is surjective for all $a_1,a_2,a_3\in H^2(M;\R)$.

The relation $\tsR$ also satisfies 
the relative $h$-principle in~\cite[6.2.C]{EM}. 
For the $(k+1)$-ball $\Lambda=B^{k+1}$
with boundary $\p B^{k+1}=S^k$ this means that,
if a map $S^k\to\sV\subset\sF$ extends over $B^{k+1}$ in $\sF$,
and one chooses any smooth extension of the projection 
$S^k\to\sV\to H^2(M;\R)^3$ over $B^{k+1}$,  
then this extension lifts to a smooth map 
$B^{k+1}\to\sV$, equal to the given map over the 
boundary (and homotopic to the given map in $\sF$). 
Hence the homomorphism $\pi_k(\sV)\to\pi_k(\sF)$ 
is injective. This proves Theorem~\ref{thm:gromov}.
\end{proof}


\section{Self-Adjoint Fredholm operators} \label{app:RC}  

This appendix is included for the benefit of the reader.
It discusses two well known results about self-adjoint 
Fredholm operators, that are used in Section~\ref{sec:FUETER}.
Lemma~\ref{le:fredholm} characterizes unbounded self-adjoint 
Fredholm operators and Lemma~\ref{le:RC} shows that regular 
crossings are isolated. While Lemma~\ref{le:RC} follows from the 
Kato selection theorem (see~\cite[Lemma~4.7]{RS2}),
the proof given below is simpler and more direct.

Let $H$ be a Hilbert space and $V\subset H$ be a dense linear
subspace that is a Hilbert space in its own right.  Suppose that 
the inclusion $V\hookrightarrow H$ is a compact operator.  
Denote the inner product on $H$ by $\langle\cdot,\cdot\rangle$,
the norm on $H$ by $\Norm{x}_H:=\sqrt{\langle x,x\rangle}$ for $x\in H$,
and the norm on $V$ by $\Norm{x}_V$ for $x\in V$. 
Let $\sS$ be the space of symmetric bounded linear operators $A:V\to H$
and $\sD\subset\sS$ be the subset of self-adjoint operators.
Thus a bounded linear operator $D:V\to H$ is an element of $\sD$
if and only if $\inner{Dx}{\xi}=\inner{x}{D\xi}$ for all $x,\xi\in V$
and, for every $x\in H$, the following holds
\begin{equation}\label{eq:selfadjoint}
\sup_{0\ne \xi\in V}\frac{\Abs{\inner{x}{D\xi}}}{\Norm{\xi}_H}<\infty
\qquad\iff\qquad x\in V.
\end{equation}
Every $D\in\sD$ is a Fredholm operator of index zero and 
regular crossings of differentiable paths $\R\to\sD:s\mapsto D(s)$ 
are isolated.  Proofs of these well known observations
are included here for completeness of the exposition.

\begin{lemma}\label{le:fredholm}
Let $D\in\sS$.  Then the following are equivalent.

\smallskip\noindent{\bf (i)}
$D\in\sD$.

\smallskip\noindent{\bf (ii)}
$(\im\,D)^\perp\subset V$ and
there is a constant $c>0$ such that, for all $x\in V$,
\begin{equation}\label{eq:fredholm}
\Norm{x}_V\le c\left(\Norm{Dx}_H+\Norm{x}_H\right).
\end{equation}

\smallskip\noindent{\bf (iii)}
$D$ is a Fredholm operator of index zero.

\smallskip\noindent
In particular, $\sD$ is an open subset of $\sS$ in the norm topology.
\end{lemma}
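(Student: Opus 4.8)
The plan is to establish the cycle of implications (i)$\,\Rightarrow\,$(ii)$\,\Rightarrow\,$(iii)$\,\Rightarrow\,$(i), and then to deduce the openness of $\sD$ from the equivalence (i)$\,\Leftrightarrow\,$(iii) together with the standard fact that the Fredholm operators of index zero form an open subset of the space of bounded operators $V\to H$.

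For (i)$\,\Rightarrow\,$(ii), assume $D\in\sD$. The inclusion $(\im D)^\perp\subset V$ is immediate: if $x\perp\im D$ then $\inner{x}{D\xi}=0$ for all $\xi\in V$, so the supremum in \eqref{eq:selfadjoint} is finite and $x\in V$. For the a priori estimate \eqref{eq:fredholm} I would first verify that $D$, regarded as an unbounded operator on $H$ with domain $V$, is \emph{closed}: if $x_n\in V$ with $x_n\to x$ and $Dx_n\to z$ in $H$, then $\inner{x}{D\xi}=\lim\inner{Dx_n}{\xi}=\inner{z}{\xi}$ for every $\xi\in V$, so the left-hand condition in \eqref{eq:selfadjoint} holds and $x\in V$ (with $Dx=z$ by density of $V$ and symmetry). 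Hence $V$ carries two complete norms, its given norm $\Norm{\cdot}_V$ and the graph norm $x\mapsto\Norm{Dx}_H+\Norm{x}_H$, and the identity is a continuous bijection from the former onto the latter; the open mapping theorem then bounds the inverse, which is exactly \eqref{eq:fredholm}. The place where self-adjointness (rather than mere symmetry) is used is precisely the closedness of $D$.

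For (ii)$\,\Rightarrow\,$(iii), the estimate \eqref{eq:fredholm} together with the compactness of $V\hookrightarrow H$ gives, by the usual Rellich-type argument, that $\ker D$ is finite-dimensional and $\im D$ is closed. Symmetry gives $\ker D\subseteq(\im D)^\perp$; conversely any $y\in(\im D)^\perp$ lies in $V$ by hypothesis, and then $\inner{Dy}{\xi}=\inner{y}{D\xi}=0$ for all $\xi\in V$ forces $Dy=0$. Thus $\coker D\cong(\im D)^\perp=\ker D$ is finite-dimensional and $\INDEX D=0$. For (iii)$\,\Rightarrow\,$(i), note again $\ker D\subseteq(\im D)^\perp$, and since both spaces are finite-dimensional of equal dimension (index zero) we get $(\im D)^\perp=\ker D\subset V$. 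Now suppose $x\in H$ makes $\xi\mapsto\inner{x}{D\xi}$ bounded on $V$; by density there is $z\in H$ with $\inner{x}{D\xi}=\inner{z}{\xi}$ for all $\xi\in V$. Write $z=Dw+z_1$ with $w\in V$ and $z_1\in(\im D)^\perp=\ker D$. Using symmetry, $\inner{x-w}{D\xi}=\inner{z_1}{\xi}$ for all $\xi\in V$; testing against $\xi\in\ker D$ yields $\inner{z_1}{\xi}=0$ for all such $\xi$, hence $z_1=0$. Therefore $x-w\in(\im D)^\perp=\ker D\subset V$, so $x\in V$, which is \eqref{eq:selfadjoint}. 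Finally, by (i)$\,\Leftrightarrow\,$(iii) the set $\sD$ is exactly the set of index-zero Fredholm operators inside $\sS$, and this is open because the index-zero Fredholm operators are open in the Banach space of bounded operators $V\to H$.

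I expect the main work to be the a priori estimate in (i)$\,\Rightarrow\,$(ii) — specifically the observation that it is a formal consequence of the closedness of $D$ via the open mapping theorem — together with the small algebraic trick in (iii)$\,\Rightarrow\,$(i) of pairing the identity $\inner{x-w}{D\xi}=\inner{z_1}{\xi}$ with elements of $\ker D$ in order to kill the cokernel component $z_1$. The compactness of the inclusion $V\hookrightarrow H$ is used only in (ii)$\,\Rightarrow\,$(iii).
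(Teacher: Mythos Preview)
Your proof is correct and follows essentially the same route as the paper's: the same cycle (i)$\Rightarrow$(ii)$\Rightarrow$(iii)$\Rightarrow$(i), the closed-graph/open-mapping argument for the a priori estimate, the Rellich argument for (ii)$\Rightarrow$(iii), and the Riesz-plus-decomposition trick for (iii)$\Rightarrow$(i). Your handling of (iii)$\Rightarrow$(i) is in fact marginally tidier than the paper's --- you kill the cokernel component $z_1$ directly by testing $\inner{x-w}{D\xi}=\inner{z_1}{\xi}$ against $\xi\in\ker D$, whereas the paper carries $y_0$ along and instead shows $\inner{x-x_1}{D\xi}=0$ for all $\xi$ by splitting $\xi$ into its $\ker D$ and $(\ker D)^\perp$ parts.
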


\begin{proof}
We prove that~(i) implies~(ii).  Assume $D\in\sD$. 
By~\eqref{eq:selfadjoint} $(\im\,D)^\perp\subset V$. 
We show that the graph of~$D$
is a closed subspace of~${H\times H}$. Let $x_n\in V$ and 
$x,y\in H$ be such that $\lim_{n\to\infty}\Norm{x-x_n}_H=0$
and $\lim_{n\to\infty}\Norm{y-Dx_n}_H=0$. Then
$
\inner{x}{D\xi} = \lim_{n\to\infty}\inner{x_n}{D\xi}
= \lim_{n\to\infty}\inner{Dx_n}{\xi} = \inner{y}{\xi}
$
for $\xi\in V$. Hence $x\in V$ by~\eqref{eq:selfadjoint} 
and, since $D$ is symmetric, it follows that $Dx=y$.  
Thus $D$ has a closed graph.
Now ${V\to\mathrm{graph}(D):x\mapsto(x,Ax)}$
is a bijective bounded linear operator
and so has a bounded inverse.  
This proves~\eqref{eq:fredholm}.

We prove that~(ii) implies~(iii).
Since $V\hookrightarrow H$ is a compact operator, 
it follows from~\eqref{eq:fredholm} that~$D$ 
has a finite-dimensional kernel and a closed image
(see~\cite[Lemma~A.1.1]{MS}). 
Since $(\im\,D)^\perp\subset V$ and~$D$ is symmetric, 
it follows that $(\im\,D)^\perp=\ker\,D$.  Hence 
$\dim\,\coker\,D=\dim\,\ker\,D$. 

We prove that~(iii) implies~(i). 
Let $D\in\sS$ be a Fredholm operator of index zero.
Then $D$ has a finite-dimensional kernel and a closed image.
Since $D$ is symmetric, $\ker\,D\subset(\im\,D)^\perp$. 
Since $D$ has Fredholm index zero, $\ker\,D = (\im\,D)^\perp$
and hence $\im\,D=(\ker\,D)^\perp$.
Now let $x\in H$ and suppose that there is a constant $c$ 
such that $\Abs{\inner{x}{D\xi}}\le c\Norm{\xi}_H$ for every $\xi\in V$.
By the Riesz representation theorem, there exists an element 
$y\in H$ such that $\inner{x}{D\xi}=\inner{y}{\xi}$ for $\xi\in V$.
Choose $y_0\in\ker\,D$ such that $y-y_0\perp\ker\,D$.
Then $y-y_0\in\im\,D$. Choose $x_1\in V$ such that 
$Dx_1=y-y_0$. Then
$
\inner{x-x_1}{D\xi} 
= \inner{y}{\xi}-\inner{Dx_1}{\xi}
= \inner{y_0}{\xi}
$
for every $\xi\in V$. Given $\xi\in V$ choose $\xi_0\in\ker\,D$ 
such that $\xi-\xi_0\perp\,\ker D$. Then
$$
\inner{x-x_1}{D\xi} 
= \inner{x-x_1}{D(\xi-\xi_0)} 
= \inner{y_0}{\xi-\xi_0}=0.
$$
Hence $x-x_1\in(\im\,D)^\perp=\ker\,D\subset V$ and hence $x\in V$.

Since~(i) and~(iii) are equivalent it follows from the perturbation
theory for Fredholm operators (see~\cite[Theorem~A.1.5]{MS}) 
that~$\sD$ is an open subset of $\sS$ with respect to the norm topology.
This proves Lemma~\ref{le:fredholm}.
\end{proof}

Let $I\subset\R$ be an open interval and $I\to\sD:s\mapsto D(s)$
be a continuous path with respect to the norm topology on $\sD$. 
The path is called {\bf weakly differentiable} if the map
$I\to\R:s\mapsto\inner{x}{D(s)\xi}$ is differentiable for 
every $x\in H$ and every $\xi\in V$. A {\bf crossing} is an element
$s\in I$ such that $D(s)$ has a nontrivial kernel.
A crossing $s\in I$ is called {\bf regular} if the quadratic 
form 
$$
\Gamma_s:\ker\,D(s)\to\R,\qquad
\Gamma_s(\xi):=\inner{\xi}{\dot D(s)\xi},
$$
is nondegenerate. 

\begin{lemma}\label{le:RC}
Let $I\to\sD:s\mapsto D(s)$ be a weakly differentiable path
of self-adjoint operators and let $s_0\in I$ be a regular crossing.
Then there is a $\delta>0$ such that 
$D(s):V\to H$ is bijective for every $s\in I$ with
$0<\Abs{s-s_0}<\delta$. 
\end{lemma}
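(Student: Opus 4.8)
The plan is to reduce to a finite-dimensional eigenvalue problem near the crossing $s_0$. First I would use the fact, established in Lemma~\ref{le:fredholm}, that each $D(s)\in\sD$ is a self-adjoint Fredholm operator of index zero. Set $H_0:=\ker D(s_0)$, a finite-dimensional subspace of $V$, and let $P:H\to H_0$ be the orthogonal projection. Decompose $H=H_0\oplus H_0^\perp$ and $V=H_0\oplus(V\cap H_0^\perp)$; with respect to this splitting $D(s_0)$ is block-diagonal, the $H_0$-block is zero, and the $H_0^\perp$-block $D(s_0)|_{V\cap H_0^\perp}:V\cap H_0^\perp\to H_0^\perp$ is bijective. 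Since $\sD$ is open in the norm topology and the path is norm-continuous, for $s$ near $s_0$ the corresponding $(2,2)$-block of $D(s)$ stays bijective (with uniformly bounded inverse), so a Schur-complement / Lyapunov--Schmidt reduction applies: $D(s)$ is bijective if and only if the reduced operator
\begin{equation*}
Q(s):=P D(s)|_{H_0} - P D(s)\,\bigl(D(s)|_{V\cap H_0^\perp}\bigr)^{-1}(1-P)D(s)|_{H_0}:H_0\to H_0
\end{equation*}
is bijective.

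Next I would analyze $Q(s)$ for $s$ close to $s_0$. Since $D(s_0)|_{H_0}=0$, the second term in $Q(s)$ is quadratically small: writing $D(s)-D(s_0)=O(|s-s_0|)$ in operator norm (here I use norm-continuity of the path, upgraded to a Lipschitz-type bound by weak differentiability, or I just keep it as $o(1)$), the off-diagonal blocks $PD(s)|_{H_0}$ and $(1-P)D(s)|_{H_0}$ are each $O(|s-s_0|)$, hence the composite correction term is $O(|s-s_0|^2)$. Therefore
\begin{equation*}
Q(s)=P D(s)|_{H_0}+O(|s-s_0|^2)=P\bigl(D(s_0)+(s-s_0)\dot D(s_0)+o(s-s_0)\bigr)\big|_{H_0}+O(|s-s_0|^2),
\end{equation*}
so $Q(s)=(s-s_0)\,P\dot D(s_0)|_{H_0}+o(|s-s_0|)$. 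Now $P\dot D(s_0)|_{H_0}:H_0\to H_0$ is precisely the self-adjoint endomorphism whose associated quadratic form is the crossing form $\Gamma_{s_0}(\xi)=\inner{\xi}{\dot D(s_0)\xi}$ (for $\xi\in H_0$ the component of $\dot D(s_0)\xi$ orthogonal to $H_0$ is killed by $P$, and for the inner product $\inner{\xi}{\dot D(s_0)\xi}=\inner{\xi}{P\dot D(s_0)\xi}$). Regularity of the crossing says exactly that this endomorphism is invertible on the finite-dimensional space $H_0$. An invertible operator on a finite-dimensional space has a spectrum bounded away from zero, so $(s-s_0)P\dot D(s_0)|_{H_0}$ has all eigenvalues of modulus $\ge c|s-s_0|$ for some $c>0$; adding the $o(|s-s_0|)$ perturbation, $Q(s)$ remains invertible for $0<|s-s_0|<\delta$ once $\delta$ is small. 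Hence $D(s)$ is bijective there, which is the claim.

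The one technical point requiring care — and the main obstacle — is justifying that the weak differentiability hypothesis suffices to control the error terms in operator norm on the finite-dimensional block $H_0$. Weak differentiability only gives differentiability of $s\mapsto\inner{x}{D(s)\xi}$ for fixed $x\in H$, $\xi\in V$; but since $H_0$ is finite-dimensional, choosing an orthonormal basis $\xi_1,\dots,\xi_m$ of $H_0$ and testing against a dual basis reduces the relevant blocks $PD(s)|_{H_0}$ and $P\dot D(s_0)|_{H_0}$ to finitely many scalar functions $s\mapsto\inner{\xi_a}{D(s)\xi_b}$, each of which is differentiable at $s_0$ by hypothesis. This gives $PD(s)|_{H_0}=(s-s_0)P\dot D(s_0)|_{H_0}+o(|s-s_0|)$ in the (finite-dimensional, hence equivalent) operator norm on $\mathrm{End}(H_0)$, which is all that is needed; the $(1-P)D(s)|_{H_0}$ block need only be $O(|s-s_0|)$ in $H$-norm, which follows from norm-continuity of the path together with $(1-P)D(s_0)|_{H_0}=0$. (Alternatively one can invoke that a norm-continuous path into $\sD$ has well-defined spectral flow and that at a regular crossing exactly $\mathrm{sign}\,\Gamma_{s_0}$ eigenvalues cross transversally, but the direct reduction above is self-contained and avoids developing spectral-flow machinery.) With these estimates in hand the argument closes and Lemma~\ref{le:RC} follows.
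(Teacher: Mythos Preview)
Your Lyapunov--Schmidt reduction is a genuinely different route from the paper's proof. The paper argues by contradiction: assuming $D(s_n)x_n=0$ with $\Norm{x_n}_H=1$ and $s_n\to s_0$, it uses the uniform estimate~\eqref{eq:fredholm} and compactness of $V\hookrightarrow H$ to extract a limit $x_0\in\ker D(s_0)$, then shows $\inner{\dot D(s_0)\xi}{x_0}=0$ for all $\xi\in\ker D(s_0)$, contradicting nondegeneracy of~$\Gamma_{s_0}$. Your Schur-complement approach is more structural and, once made rigorous, would also identify the eigenvalues crossing zero with those of the crossing form --- useful if one wants the signature interpretation of the spectral flow.

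There is, however, a real gap in your error control. For the correction term $BD^{-1}C$ (with $B=PD(s)|_{V\cap H_0^\perp}$ and $C=(1-P)D(s)|_{H_0}$) to be $o(|s-s_0|)$ you claim that $C$ is $O(|s-s_0|)$ ``from norm-continuity of the path together with $(1-P)D(s_0)|_{H_0}=0$''. Norm-continuity gives only $o(1)$, not a rate; and weak differentiability does \emph{not} upgrade to a Lipschitz bound $D(s)-D(s_0)=O(|s-s_0|)$ in the operator norm $V\to H$. What does work --- and is exactly the mechanism hidden in the paper's proof --- is this: for each fixed $\xi\in H_0$ the difference quotients $D(s)\xi/(s-s_0)$ converge weakly in $H$ (by weak differentiability and $D(s_0)\xi=0$), hence are norm-bounded by the uniform boundedness principle, so $\Norm{D(s)\xi}_H=O(|s-s_0|)$. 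Since $\dim H_0<\infty$ this gives $\Norm{C(s)}=O(|s-s_0|)$, and by symmetry of $D(s)$ one gets $\Norm{B(s)\eta}^2=\sum_a|\inner{\eta}{D(s)\xi_a}|^2\le\Norm{\eta}_H^2\sum_a\Norm{D(s)\xi_a}_H^2$, so $B$ inherits the same bound. With this fix your argument closes. (A minor labeling slip: $PD(s)|_{H_0}$ is the diagonal $(1,1)$-block, not an off-diagonal one; the blocks entering the correction are $B$ and $C$ as above, and the inverse in your Schur complement should be that of $(1-P)D(s)|_{V\cap H_0^\perp}$.)
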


\begin{proof}
Assume without loss of generality that $s_0=0$.
By Lemma~\ref{le:fredholm} there is a constant $c>0$ such that
\begin{equation}\label{eq:cs}
\Norm{x}_V\le c\left(\Norm{D(s)x}_H+\Norm{x}_H\right)
\end{equation}
for every $x\in V$ and every $s$ in some neighborhood of zero.
Shrinking $I$, if necessary, we may assume that~\eqref{eq:cs}
holds for every $s\in I$.  

Assume, by contradiction, that there 
is a sequence $s_n\in I$ such that $s_n\to0$ and $D(s_n)$
is not injective for every $n$.  Then there is a sequence 
$x_n\in V$ such that $D(s_n)x_n=0$ and $\Norm{x_n}_H=1$.
Thus $\Norm{x_n}_V\le c$ by~\eqref{eq:cs}. 
Passing to a subsequence we may assume that $x_n$ 
converges in $H$ to $x_0$.  Then $\Norm{x_0}_H=1$ and
$\inner{x_0}{D(0)\xi} = \lim_{n\to\infty}\inner{x_n}{D(s_n)\xi} = 0$
for $\xi\in V$. Hence $x_0\in\ker\,D(0)$.
Moreover, for every $\xi\in\ker\,D(0)$, the sequence $D(s_n)\xi/s_n$ converges 
weakly to $\dot D(0)\xi$ and is therefore bounded, so
$$
\inner{\dot D(0)\xi}{x_0}
= \lim_{n\to\infty}\Inner{\frac{D(s_n)\xi}{s_n}}{x_0}
= \lim_{n\to\infty}\Inner{\frac{D(s_n)\xi}{s_n}}{x_n}
= 0.
$$
This contradicts the nondegeneracy of~$\Gamma_0$
and proves Lemma~\ref{le:RC}.
\end{proof}

Let $I$ be a compact interval and $I\to\sD:s\mapsto D(s)$
be a weakly differentiable path with only regular crossings
such that $D(s)$ is bijective at the endpoints of $I$. 
The {\bf spectral flow} is the sum of the signatures of 
the crossing forms $\Gamma_s$ over all crossings.
It is invariant under homotopy with fixed endpoints and is additive 
under catenation.  (See~\cite{RS2} for an exposition.)

\medskip\noindent{\bf Acknowledgement.}
These are expanded notes for a lecture given at Imperial College 
on 10 February 2012.  Thanks to Simon Donaldson and Thomas Walpuski 
for many helpful discussions and for their hospitality during my visit.
Thanks to Yasha Eliashberg for his explanation of the h-principle,
and thanks to the referee and editor for helpful comments on the exposition.


\end{document}